\newtheorem{proposition}{Proposition}[section]
\newtheorem{lemma}[proposition]{Lemma}
\newtheorem{corollary}[proposition]{Corollary}
\theoremstyle{definition}
\newtheorem{definition}[proposition]{Definition}
\newtheorem{remark}[proposition]{Remark}
\newif\ifautoqed
\newif\ifasl
\newif\ifautoproof
\def\mathtext#1#2{\ifmmode#1{#2}\else$#1{#2}$\fi}
\def\mathdefzero#1#2#3{\relax\def#1{\relax\mathtext{#3}{#2}}}
\def\mathdefone#1#2#3{\relax\def#1##1{\relax\def\1{{##1}}\mathtext{#3}{#2}}}
\def\mathdeftwo#1#2#3{\relax\def#1##1##2{\relax\def\1{{##1}}
   \def\2{{##2}}\mathtext{#3}{#2}}}
\def\mathdefthree#1#2#3{\relax\def#1##1##2##3{\relax\def\1{{##1}}
   \def\2{{##2}}\def\3{{##3}}\mathtext{#3}{#2}}}
\def\mmdef#1#2#3#4{\ifcase#1\mathdefzero{#2}{#3}{#4}
   \or\mathdefone{#2}{#3}{#4}   \or\mathdeftwo{#2}{#3}{#4}
   \or\mathdefthree{#2}{#3}{#4} \or\mathdeffour{#2}{#3}{#4}\fi}
\def\mdef#1#2[#3]{\mmdef{#1}{#2}{#3}{}}
\def\reldef#1#2[#3]{\mmdef{#1}{#2}{#3}{\mathrel}}
\def\bindef#1#2[#3]{\mmdef{#1}{#2}{#3}{\mathbin}}
\def\opdef#1#2[#3]{\mmdef{#1}{#2}{#3}{\mathop}}
\def\ssfdef#1[#2]{\def#1{{\ssf {#2}}}}
\def\notmodels{\mathrel|\joinrel\not=}
\def\incl{\subseteq}
\setlist[\left\{\mkern2mu{\1}_{\2},\ldots,
     {\1}_{\3}\mkern2mu\right\}]
\def\power{\wp}
\def\Iff{\,\Longleftrightarrow\,}
\def\implies{\,\rightarrow\,}
\def\Implies{\,\Longrightarrow\,}
\def\qand{\quad\hbox{and}\quad}
\def\qIff{\quad\Iff\quad}
\def\qImplies{\quad\Implies\quad}
\def\DDegree#1#2{{\ssbf#1}\ifcat#20
   \ifcase#2\or'\or''\or'''\else^{(#2)}\fi\else
   ^{(#2)}\fi} 
\def\degree#1#2{\ifmmode\DDegree{#1}{#2}\else
   $\DDegree{#1}{#2}$\fi} 
\def\lnot{\neg\>}
\def\lbrac{[}
\def\rbrac{]}
\ssfdef\ext[Ext]
\def\mm{{\ssf m}}
\def\Impliess{\;\Implies\;}
\def\cases#1{\left\{\,\vcenter
   {\m@th\ialign{$##\hfil$&\quad##\hfil\crcr#1\crcr}}\right.}
\def\join{\mathchoice{\mathrel{\rlap{$\vee$}\mskip1.8 mu\vee}}{\mathrel{\rlap{$\vee$}\mskip1.8 mu\vee}}{\mathrel{\rlap{$\scriptstyle\vee$}\mskip1.8 mu\vee}}{\mathrel{\rlap{$\scriptscriptstyle\vee$}\mskip1.8 mu\vee}}}
\def\meet{\mathchoice{\mathrel{\rlap{$\wedge$}\mskip1.8 mu\wedge}}{\mathrel{\rlap{$\wedge$}\mskip1.8 mu\wedge}}{\mathrel{\rlap{$\scriptstyle\wedge$}\mskip1.8 mu\wedge}}{\mathrel{\rlap{$\scriptscriptstyle\wedge$}\mskip1.8 mu\wedge}}}
\def\djoin{\buildrel\lower3pt\hbox{$\scriptscriptstyle\dualmarker$}\over\join}
\def\dmeet{\buildrel\lower3pt\hbox{$\scriptscriptstyle\dualmarker$}\over\meet}
\let\dembeds=\dimbeds
\def\dlatimpl{\buildrel\lower2pt\hbox{$\scriptscriptstyle\dualmarker$}\over\latimpl}
\def\dleq{\buildrel\lower2pt\hbox{$\scriptscriptstyle\dualmarker$}\over\leq}
\let\dualmarker=\circ
\def\latz{{\bf 0}}
\def\lato{{\bf 1}}
\def\dgp{{\bf p}}
\def\dgq{{\bf q}}
\def\dgr{{\bf r}}
\def\Ifff{\,\Iff\,}
\def\dIff{\;:\Longleftrightarrow\;}
\def\dff#1{{\it#1}}
\def\dff#1{{\bfit #1}}
\let\ssf=\sf
\def\bframe#1{\begin{frame}\frametitle{#1}}
\let\iso=\simeq
\def\subdlatimpl{\,\buildrel\lower2pt\hbox{$\scriptscriptstyle\dualmarker$}\over\sublatimpl\,}
\def\Th{{\ssf Th}}
\def\Thd{{\ssf Th}^\dualmarker}
\def\IPC{{\ssf IPC}}
\def\WEM{{\ssf WEM}}
\let\mathstr=\mathfrak
\def\forces{\mathrel{\raise 7pt
   \hbox{$\mathchar "330C\mkern-3.5mu \mathchar  "330C$}
   \mkern-7.5mu
   {\kern 1.5pt\hbox{$\raise .5 pt\hbox{}
   \over\kern6pt$}}}}
\def\forces{\Vdash}
\newif\ifendpf
\def\squarebox#1{\vbox{\hrule\hbox{\vrule height#1\kern#1\vrule}\hrule}}
\def\endpf{\ifendpf\ifmmode\enspace\fi\squarebox{6pt}\fi\global\endpffalse}
\def\noproof{\ \qed}
\def\p#1{{\ssf p}_{#1}}
\let\cal=\mathcal
\title{Addendum to: A survey of Mu\v cnik and Medvedev degrees}
\author{Peter G. Hinman}
\address{Department of Mathematics\\
University of Michigan\\
2074 East Hall\\
530 Church Street\\
Ann Arbor, MI 48109-1043, USA}
\email{pgh@umich.edu}
\date{}
\begin{document}

\begin{abstract}We include here some material that did not make its way into the published version [2], in particular a proof of Theorem K to the effect that there is an initial segment of the strong degrees with dual theory \IPC, the intuitionistic propositional calculus.
\end{abstract}

\maketitle 

\tableofcontents

\setcounter{section}{16}

\begin{section}{Implicative lattices (2)}
 This section presents some additional facts about (dual-)implicative lattices in general and \Dgs\ in particular. 
 
 Lemma \ref{embedtheory} showed that when ${\mathstr L}\embeds{\mathstr K}$ is a lattice embedding that respects $\latimpl$, then $\Th({\mathstr K})\incl\Th({\mathstr L})$. We write ${\mathstr K}\rightarrowtail{\mathstr L}$ iff there exists a surjection of $\mathstr K$ onto $\mathstr L$ that respects the lattice operations and $\latimpl$ and have similarly
 
 \begin{lemma}
 For any implicative lattices $\mathstr K$ and $\mathstr L$, if ${\mathstr K}\rightarrowtail{\mathstr L}$, then $\Th({\mathstr K})\incl\Th({\mathstr L})$.
 \end{lemma}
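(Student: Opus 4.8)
The plan is to mirror the proof of Lemma \ref{embedtheory}, replacing ``restrict a valuation along an embedding'' by ``lift a valuation along a surjection.'' Fix a surjection $h\colon{\mathstr K}\rightarrowtail{\mathstr L}$ that respects $\meet$, $\join$ and $\latimpl$, and let $\varphi$ be a propositional formula, say in the variables $\llist p0{n-1}$, with $\varphi\in\Th({\mathstr K})$; we must show $\varphi\in\Th({\mathstr L})$. Recall that membership of $\varphi$ in $\Th({\mathstr M})$ means that $\varphi$ takes the value $\lato$ under every assignment of its variables to elements of $\mathstr M$.

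First I would isolate the one place where surjectivity (as opposed to being a mere homomorphism) is used. Given an arbitrary assignment $v$ of $\llist p0{n-1}$ to elements of $L$, surjectivity of $h$ lets us choose, for each $i<n$, an element $a_i\in K$ with $h(a_i)=v(p_i)$; this defines an assignment $w$ of the variables to $K$ with $h\circ w=v$ on the variables. I would also note that $h(\lato)=\lato$: since $\lato = a\latimpl a$ in any implicative lattice and $h$ respects $\latimpl$, we get $h(\lato)=h(a)\latimpl h(a)=\lato$ (this also follows from surjectivity and monotonicity of $h$, as $h(\lato)$ is then an upper bound of $L$).

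Next, by induction on the structure of a subformula $\psi$ of $\varphi$, I would prove that $h$ applied to the value of $\psi$ under $w$ in $\mathstr K$ equals the value of $\psi$ under $v$ in $\mathstr L$. The base case is the defining property $h\circ w=v$ on variables, and the induction steps for $\meet$, $\join$ and $\latimpl$ are precisely the hypotheses that $h$ respects these three operations. Applying this with $\psi=\varphi$: since $\varphi\in\Th({\mathstr K})$, the value of $\varphi$ under $w$ is $\lato$ in $\mathstr K$, hence the value of $\varphi$ under $v$ is $h(\lato)=\lato$ in $\mathstr L$. As $v$ was arbitrary, $\varphi\in\Th({\mathstr L})$, completing the proof.

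There is essentially no hard part: the only nontrivial point is the lifting step of the second paragraph, which is exactly where surjectivity is needed — a non-surjective homomorphism need not reflect valuations, and then the inclusion of theories can fail. The induction of the third paragraph is the routine ``substitution lemma'' for the propositional connectives and is formally identical to the corresponding step in the proof of Lemma \ref{embedtheory}.
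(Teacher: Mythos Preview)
Your proposal is correct and follows essentially the same route as the paper: lift an $\mathstr L$-valuation through the surjection to a $\mathstr K$-valuation, then use that the surjection commutes with the connectives to transfer the value $\lato$. The paper compresses your induction into the single assertion $w=\eta\circ v$ and takes $\eta(\lato_{\mathstr K})=\lato_{\mathstr L}$ for granted, but the argument is the same; the one small omission in your write-up is the $\bot$ (equivalently $\lnot$) clause, which is handled by the same surjectivity-plus-monotonicity argument you gave for $\lato$.
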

 
 \begin{proof}
 If $\eta$ is a surjection of $\mathstr K$ onto $\mathstr L$ as described, then for any $\mathstr L$-valuation $w$, there exists a function $v_0$ such that for each atomic sentence {\ssf p}, $w({\ssf p})=\eta(v_0({\ssf p}))$ and hence a $\mathstr K$-valuation $v$ such that $w=\eta\circ v$. Then for any $\phi\in\Th({\mathstr K})$ and any $\mathstr L$-valuation $w$,
 \[w(\phi)=\eta(v(\phi))=\eta(\lato_{\mathstr K})=\lato_{\mathstr L},\]
 so $\phi\in\Th({\mathstr L})$.
 \end{proof}
 
 We shall need a particular family of instances of this (see Definition \ref{segments}):
 
 \begin{lemma}
 For any implicative lattice $\mathstr L$ and $c,d,e\in L$, ${\mathstr L}[c,d]\rightarrowtail{\mathstr L}[c\meet e, d\meet e]$.
 \end{lemma}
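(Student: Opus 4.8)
The plan is to write the surjection down by hand. Define $\eta\colon{\mathstr L}[c,d]\to{\mathstr L}[c\meet e,d\meet e]$ by $\eta(x)=x\meet e$. First one checks it is well defined into the stated segment: if $c\leq x\leq d$ then $c\meet e\leq x\meet e\leq d\meet e$. Surjectivity comes with an explicit one-sided inverse: given $y$ with $c\meet e\leq y\leq d\meet e$, put $x:=y\join c$; then $c\leq x$, and $x\leq d$ because $y\leq d\meet e\leq d$ and $c\leq d$, so $x\in{\mathstr L}[c,d]$, while, using distributivity of implicative lattices together with $y\leq e$ and $c\meet e\leq y$,
\[\eta(x)=(y\join c)\meet e=(y\meet e)\join(c\meet e)=y\join(c\meet e)=y.\]

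Next I would check that $\eta$ respects the operations. Preservation of $\meet$ is immediate, preservation of $\join$ is distributivity once more, and $\eta$ sends the least element $c$ and greatest element $d$ of ${\mathstr L}[c,d]$ to the least element $c\meet e$ and greatest element $d\meet e$ of ${\mathstr L}[c\meet e,d\meet e]$. The only step with any content is preservation of $\latimpl$. By the description of implication in a segment (\ref{seglatimpl}), the value of $x\latimpl x'$ computed in ${\mathstr L}[c,d]$ is $(x\latimpl x')\meet d$, the implication on the right being taken in $\mathstr L$, and similarly in ${\mathstr L}[c\meet e,d\meet e]$; so I must verify
\[\bigl((x\latimpl x')\meet d\bigr)\meet e=\bigl((x\meet e)\latimpl(x'\meet e)\bigr)\meet(d\meet e).\]
Since both sides carry the common factor $d\meet e$, this reduces to the identity
\[e\meet(x\latimpl x')=e\meet\bigl((e\meet x)\latimpl(e\meet x')\bigr),\]
valid in every implicative lattice.

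That identity is the heart of the argument, and it is a two-line consequence of the adjunction defining $\latimpl$. For one inclusion: from $\bigl(e\meet(x\latimpl x')\bigr)\meet(e\meet x)=e\meet x\meet(x\latimpl x')\leq e\meet x'$ we get $e\meet(x\latimpl x')\leq(e\meet x)\latimpl(e\meet x')$, and since $e\meet(x\latimpl x')\leq e$ it follows that $e\meet(x\latimpl x')\leq e\meet\bigl((e\meet x)\latimpl(e\meet x')\bigr)$. For the other: from $\bigl(e\meet((e\meet x)\latimpl(e\meet x'))\bigr)\meet x=e\meet x\meet\bigl((e\meet x)\latimpl(e\meet x')\bigr)\leq e\meet x'\leq x'$ the element $e\meet\bigl((e\meet x)\latimpl(e\meet x')\bigr)$ is $\leq x\latimpl x'$, and being $\leq e$ it is $\leq e\meet(x\latimpl x')$. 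This gives the identity, hence preservation of $\latimpl$, hence the lemma. I expect this last computation — really just the displayed identity — to be the sole nontrivial point; everything else is bookkeeping with distributivity.
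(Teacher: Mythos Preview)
Your proof is correct and follows essentially the same route as the paper: the same map $\eta(x)=x\meet e$, the same surjectivity witness $y\mapsto y\join c$, and the same two adjunction-based inequalities for preservation of $\latimpl$, packaged by you as the single identity $e\meet(x\latimpl x')=e\meet\bigl((e\meet x)\latimpl(e\meet x')\bigr)$.
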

 
 \begin{proof}
 Given $\mathstr L$ and $c$, $d$, and $e$, set $\eta(a):=a\meet e$. $\eta$ is surjective because for any $f$ with $c\meet e\leq f\leq d\meet e$, $f=\eta(c\join f)$. $\eta$ respects \latz, \lato, $\meet$ and $\join$ by definition and distributivity. For $\eta$ to respect $\latimpl$ (see Lemma \ref{seglatimpl}), means that for any $a,b\in L[c,d]$,
 \[(a\latimpl_d b)\meet e=(a\meet e)\latimpl_{d\meet e}(b\meet e),\]
 that is,
 \[(a\latimpl b)\meet d\meet e=(a\meet e\latimpl b\meet e)\meet d\meet e.\]
 This follows because by the definition of $\latimpl$, $(a\latimpl b)\leq (a\meet e\latimpl b\meet e)$, since
 \[(a\meet e)\meet (a\latimpl b)=a\meet(a\latimpl b)\meet e\leq b\meet e,\]
 and $(a\meet e\latimpl b\meet e)\meet e\leq a\latimpl b$ because 
 \begin{endproofeqnarray*}
 a\meet(a\meet e\latimpl b\meet e)\meet e&=&(a\meet e)\meet(a\meet e\latimpl b\meet e)\\
 &\leq& b\meet e\leq b.
 \end{endproofeqnarray*}
 \end{proof}
 
 \begin{corollary}\label{surjectiveTh}
 For any lattice $\mathstr L$ and $c,d,e\in L$,
 \begin{enumerate}
 \item if $\mathstr L$ is implicative, then $\Th({\mathstr L}[c,d])\incl\Th({\mathstr L}[c\meet e,d\meet e])$;
 \item if $\mathstr L$ is dual-implicative, then $\Thd({\mathstr L}[c,d])\incl\Thd({\mathstr L}[c\join e,d\join e])$.
 \end{enumerate}
 \end{corollary}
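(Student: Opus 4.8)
The plan is to read off (i) by composing the two lemmas just proved, and to deduce (ii) from (i) by passing to the order-dual lattice.

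For (i) I would simply note that an interval of an implicative lattice is again implicative, so that both theories are defined, invoke the preceding lemma to get $\mathstr L[c,d]\rightarrowtail\mathstr L[c\meet e,d\meet e]$, and then apply the first lemma of this section to conclude $\Th(\mathstr L[c,d])\incl\Th(\mathstr L[c\meet e,d\meet e])$. There is nothing further to check.

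For (ii) I would apply (i) to the order-dual $\mathstr M:=\mathstr L^\dualmarker$, which is implicative precisely because $\mathstr L$ is dual-implicative. The $\mathstr L$-interval $[c,d]$ (with $c\leq d$, as the notation presupposes) is the $\mathstr M$-interval $[d,c]$, and $\meet$ in $\mathstr M$ is $\join$ in $\mathstr L$; so (i), applied to $\mathstr M$ with the pair $(d,c)$ in place of $(c,d)$ and the same $e$, gives
\[\Th(\mathstr M[d,c])\incl\Th(\mathstr M[d\join e,c\join e]).\]
Since $c\join e\leq d\join e$, the two intervals here are exactly $(\mathstr L[c,d])^\dualmarker$ and $(\mathstr L[c\join e,d\join e])^\dualmarker$; rewriting via $\Thd(\mathstr K)=\Th(\mathstr K^\dualmarker)$ turns this into $\Thd(\mathstr L[c,d])\incl\Thd(\mathstr L[c\join e,d\join e])$, which is (ii).

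The only delicate point — the nearest thing to an obstacle — is the dualization bookkeeping in (ii): checking that the interval endpoints really swap, that $\meet$ in $\mathstr M$ is genuinely $\join$ in $\mathstr L$ so that the parameter $c\meet e$ becomes $c\join e$, and that the order-dual of a dual-implicative interval is an implicative interval so that (i) is legitimately applicable. None of this is deep; it is simply where a stray symbol could slip in.
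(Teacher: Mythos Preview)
Your proposal is correct and follows essentially the same route as the paper: part (i) is the composition of the two preceding lemmas, and part (ii) is obtained by applying (i) to $\mathstr L^\dualmarker$ and unwinding the dualization bookkeeping exactly as you describe. The paper's proof of (ii) is the one-line chain $\Thd({\mathstr L}[c,d])=\Th({\mathstr L}^\dualmarker[d,c])\incl\Th({\mathstr L}^\dualmarker[d\dmeet e,c\dmeet e])=\Thd({\mathstr L}[c\join e,d\join e])$, which is precisely your argument in compressed form.
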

 
 \begin{proof}
 Part (i) is immediate from the preceding two lemmas and (ii) is its dual -- explicitly,
 \begin{endproofeqnarray*}
 \Thd({\mathstr L}[c,d])&=&\Th({\mathstr L}^\dualmarker[d,c])\incl\Th({\mathstr L}^\dualmarker[d\dmeet e,c\dmeet e])\\
 &=&\Th({\mathstr L}[c\join e,d\join e]^\dualmarker)=\Thd({\mathstr L}[c\join e,d\join e]).
 \end{endproofeqnarray*}
 \end{proof}
 
 Next we consider some questions of distributivity; we will need only a few of the clauses of the next lemma but include them all for reference.
 
 \begin{lemma}\label{distprops}
 For any lattice $\mathstr L$ and $a$, $b$, $c$, and $d\in L$,
 \begin{enumerate}
\item if $\mathstr L$ is implicative, then
\vskip3pt
\begin{enumerate}
\item[(1)]$a\latimpl(c\meet d)=(a\latimpl c)\meet(a\latimpl d)$
\vskip3pt
\item[(2)]$a\latimpl(c\join d)\geq(a\latimpl c)\join(a\latimpl d)$
\vskip3pt
\item[(3)]$(a\join b)\latimpl c=(a\latimpl c)\meet(b\latimpl c)$
\vskip3pt
\item[(4)]$(a\meet b)\latimpl c\geq(a\latimpl c)\join(b\latimpl c);$
\end{enumerate}
\vskip5pt
\item if $\mathstr L$ is dual-implicative, then
\vskip 3pt
\begin{enumerate}
\item[$(1)^\dualmarker$]$a\dlatimpl(c\meet d)\leq(a\dlatimpl c)\meet(a\dlatimpl d)$
\vskip3pt
\item[$(2)^\dualmarker$]$a\dlatimpl(c\join d)=(a\dlatimpl c)\join(a\dlatimpl d)$
\vskip3pt
\item[$(3)^\dualmarker$]$(a\join b)\dlatimpl c\leq(a\dlatimpl c)\meet(b\dlatimpl c)$
\vskip3pt
\item[$(4)^\dualmarker$]$(a\meet b)\dlatimpl c=(a\dlatimpl c)\join(b\dlatimpl c).$
\end{enumerate}
\end{enumerate}
\end{lemma}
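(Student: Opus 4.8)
The plan is to derive part (i) directly from the universal property defining $\latimpl$ in an implicative lattice, namely $x\leq(a\latimpl b)$ iff $a\meet x\leq b$, together with the two monotonicity facts it yields: $\latimpl$ is antitone in its first argument and monotone in its second (both immediate, since enlarging $b$, or shrinking $a$, only enlarges the set of $x$ with $a\meet x\leq b$). Part (ii) will then follow formally by applying (i) to the dual lattice ${\mathstr L}^\dualmarker$.

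For clause (1): the inequality $(a\latimpl c)\meet(a\latimpl d)\leq a\latimpl(c\meet d)$ holds because $a\meet(a\latimpl c)\meet(a\latimpl d)\leq c$ and $\leq d$, hence $\leq c\meet d$, so the universal property applies; the reverse inequality is monotonicity in the second argument, using $c\meet d\leq c$ and $c\meet d\leq d$. Clause (2) is monotonicity in the second argument applied to $c\leq c\join d$ and $d\leq c\join d$, and clause (4) is antitonicity in the first argument applied to $a\meet b\leq a$ and $a\meet b\leq b$. For clause (3): antitonicity in the first argument applied to $a\leq a\join b$ and $b\leq a\join b$ gives $(a\join b)\latimpl c\leq(a\latimpl c)\meet(b\latimpl c)$; for the converse, set $x:=(a\latimpl c)\meet(b\latimpl c)$ and observe $a\meet x\leq a\meet(a\latimpl c)\leq c$ and $b\meet x\leq b\meet(b\latimpl c)\leq c$, so by the universal property $a\leq(x\latimpl c)$ and $b\leq(x\latimpl c)$, whence $a\join b\leq(x\latimpl c)$, i.e. $(a\join b)\meet x\leq c$, i.e. $x\leq(a\join b)\latimpl c$. (One could equally close clause (3) by invoking the distributivity that every implicative lattice enjoys.)

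For part (ii), note that $\mathstr L$ is dual-implicative precisely when ${\mathstr L}^\dualmarker$ is implicative, and that passing to ${\mathstr L}^\dualmarker$ interchanges $\meet$ with $\join$, reverses $\leq$, and turns $\dlatimpl$ into $\latimpl$. Hence $(1)^\dualmarker$ for $\mathstr L$ is exactly clause (2) for ${\mathstr L}^\dualmarker$, $(2)^\dualmarker$ is clause (1), $(3)^\dualmarker$ is clause (4), and $(4)^\dualmarker$ is clause (3); so (ii) is immediate from (i).

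There is no serious obstacle here: the whole argument is formal manipulation of the adjunction. The only points worth stating carefully are the two monotonicity properties of $\latimpl$ and, for the one genuinely two-sided identity (clause (3)), the small ``currying'' step that $a\meet x\leq c$ iff $a\leq(x\latimpl c)$; getting the direction of monotonicity right in each of clauses (2), (3), and (4) is the one place where a slip is easy.
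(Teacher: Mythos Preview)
Your proof is correct and is exactly the kind of straightforward calculation the paper has in mind: the paper's own proof consists of the single sentence ``These are all straightforward calculations,'' and what you have written is a clean execution of those calculations via the adjunction defining $\latimpl$ together with duality for part~(ii).
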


\begin{proof}
These are all straightforward calculations.
\end{proof}
  
 Of interest in the next section will be an extension of $(1)^\dualmarker$ in a certain special case. Recall that before Lemma \ref{wk=include} we defined for $P\incl\pre\omega\omega$ the \dff{upward Turing closure}
 \[P^{\geq_T}:=\setof g{(\exists f\in P)\;f\leqt g};\]
 we call $P$ ($\dgp\in\Dgs$) \dff{upward Turing closed} iff $P= P^{\geq_T}$ (for some $P\in\dgp$, $P=P^{\geq_T}$). 
 
 By the proofs of Lemma \ref{wk=include} and Proposition \ref{embedings},
 
 \begin{lemma}\label{simpleutc}
 For any upward Turing closed sets $P,Q\incl\pre\omega\omega$,
 \begin{enumerate}
 \item $P\leqs Q\qIff P\supseteq Q$;
 \item $P\join Q \quad=\quad P\cap Q$.\noproof
 \end{enumerate}
 \end{lemma}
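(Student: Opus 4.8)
The plan is to unwind the definition of strong (Medvedev) reducibility in each clause, following the proofs of Lemma~\ref{wk=include} and Proposition~\ref{embedings}. Recall that $P\leqs Q$ means there is a recursive functional $\Phi$, total on $Q$, with $\Phi(g)\in P$ for every $g\in Q$, that any such $\Phi$ automatically satisfies $\Phi(g)\leqt g$, and that from $g\oplus h$ a recursive functional recovers each of $g$ and $h$.

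For (i), the direction $P\supseteq Q\Rightarrow P\leqs Q$ is witnessed by the identity functional and uses nothing about $P$ or $Q$. Conversely, if $\Phi$ witnesses $P\leqs Q$, then for each $g\in Q$ we have $\Phi(g)\in P$ and $\Phi(g)\leqt g$; since $P=P^{\geq_T}$, taking $f=\Phi(g)$ in the definition of $P^{\geq_T}$ puts $g\in P$. Hence $Q\subseteq P$, which gives the equivalence.

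For (ii), first note that $P\cap Q$ is again upward Turing closed and that $P\cap Q\incl P$ and $P\cap Q\incl Q$, so by the easy direction of (i) we get $P\leqs P\cap Q$ and $Q\leqs P\cap Q$; thus $P\cap Q$ is an upper bound of $P$ and $Q$ in $\Dgs$. To see it is the least one, let $R$ be any set with $P\leqs R$ via $\Phi$ and $Q\leqs R$ via $\Psi$, and set $\Theta(h):=\Phi(h)\oplus\Psi(h)$, a recursive functional. For $h\in R$, $\Theta(h)$ computes both $\Phi(h)\in P$ and $\Psi(h)\in Q$, so upward Turing closure of $P$ and of $Q$ gives $\Theta(h)\in P\cap Q$; hence $\Theta$ witnesses $P\cap Q\leqs R$. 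Therefore $P\cap Q$ realizes $P\join Q$ in $\Dgs$, which is the content of (ii).

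I do not expect a genuine obstacle: the argument is entirely formal, and the hypothesis enters at only two points — in the converse of (i), to pass from $\Phi(g)\in P$ to $g\in P$, and in the last step of (ii), to pass from ``$\Theta(h)$ computes an element of $P$ (resp.\ of $Q$)'' to ``$\Theta(h)\in P$ (resp.\ $\in Q$)''. The upshot is that, restricted to the upward Turing closed mass problems, $\leqs$ is just reverse inclusion and the lattice operation $\join$ of $\Dgs$ is intersection, exactly as in Lemma~\ref{wk=include} for $\leqw$.
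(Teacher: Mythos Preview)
Your proof is correct and is exactly the unwinding the paper intends: the lemma is stated without proof, preceded only by the remark ``By the proofs of Lemma~\ref{wk=include} and Proposition~\ref{embedings},'' and your argument spells out precisely those two ingredients --- the identity/upward-closure trick for (i) and the $\oplus$-based join for (ii).
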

 
 \begin{proposition}\label{utcprops}
 For all upward Turing closed $P$ and all $Q, Q_0,Q_1\incl\pre\omega\omega$,
 \begin{enumerate}
 \item $Q\dlatimpl P\equiv_{\ssf s} \setof h{(\forall g\in Q)\;g\oplus h\in P}$;
 \item $\degs(P)$ is meet-irreducible;
 \item $P\dlatimpl (Q_0\meet Q_1)\equiv_{\ssf s}(P\dlatimpl Q_0)\meet (P\dlatimpl Q_1)$.
 \end{enumerate}
 \end{proposition}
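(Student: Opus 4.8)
The common thread is a single feature of upward Turing closure: if $p\in P$ and $\sigma$ is a finite string then $\sigma^\frown p\geqt p$, hence $\sigma^\frown p\in P$ — so $P$ is closed under prepending finite strings — and this is what lets one promote the finite use of a reduction into a genuine total reduction. I will use the standard representatives $\setof{f\oplus g}{f\in A,\,g\in B}$ of $\degs(A)\join\degs(B)$ and $\setof{0^\frown f}{f\in A}\cup\setof{1^\frown g}{g\in B}$ of $\degs(A)\meet\degs(B)$, and set aside the degenerate case $P=\emptyset$, where $\degs(P)=\lato$ and all three claims are trivial. For (i), write $H$ for the right-hand side; since $Q\dlatimpl P$ represents the least $\dgc$ with $\degs(P)\leqs\degs(Q)\join\dgc$, the plan is to check that $\degs(P)\leqs\degs(Q)\join\degs(H)$ and that $\degs(H)\leqs\degs(C)$ whenever $\degs(P)\leqs\degs(Q)\join\degs(C)$. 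The first holds because the identity functional carries $g\oplus h$ (with $g\in Q$, $h\in H$) to $g\oplus h\in P$. For the second, if $\Psi$ witnesses $\degs(P)\leqs\degs(Q)\join\degs(C)$ then for $g\in Q$ and $c\in C$ we have $g\oplus c\geqt\Psi(g\oplus c)\in P$, so $g\oplus c\in P$ by upward closure, so $c\in H$; thus $C\incl H$ and the identity reduces $C$ to $H$.

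For (ii), suppose $\degs(P)=\degs(A)\meet\degs(B)$. Then $\degs(P)\leqs\degs(A)$, $\degs(P)\leqs\degs(B)$, and there is a reduction $\Phi$ of $P$ into $\setof{0^\frown f}{f\in A}\cup\setof{1^\frown g}{g\in B}$. Fix any $p_0\in P$; the computation $\Phi(p_0)(0)$ halts with some $j\in\set{0,1}$ using a finite initial segment $\sigma\prec p_0$. Then the functional $p\mapsto\bigl(\Phi(\sigma^\frown p)(n+1)\bigr)_{n\in\omega}$ is a total reduction of $P$ into $A$ when $j=0$ and into $B$ when $j=1$: for every $p\in P$, upward closure gives $\sigma^\frown p\in P$, the computation of the first coordinate reads only $\sigma$ and so equals $j$, and hence $\Phi(\sigma^\frown p)$ is $j$ followed by a member of $A$ (resp.\ $B$). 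Thus $\degs(A)\leqs\degs(P)$ or $\degs(B)\leqs\degs(P)$, which together with the reverse inequalities forces $\degs(P)=\degs(A)$ or $\degs(P)=\degs(B)$.

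For (iii), the inequality $\leqs$ is clause $(1)^\dualmarker$ of Lemma \ref{distprops}. For the reverse it suffices, by minimality of $P\dlatimpl(Q_0\meet Q_1)$, to show that $(P\dlatimpl Q_0)\meet(P\dlatimpl Q_1)\leqs\degs(C)$ for every $C$ with $Q_0\meet Q_1\leqs P\join C$; I fix such a $C$ together with a witnessing $\Phi$, so $\Phi(p\oplus c)\in\setof{0^\frown f}{f\in Q_0}\cup\setof{1^\frown g}{g\in Q_1}$ for $p\in P$, $c\in C$. For $i\in\set{0,1}$ let $C_i$ be the set of $c\in C$ admitting a finite string $\rho$ with $\Phi^{\rho\oplus(c\restrict|\rho|)}(0){\downarrow}=i$; membership in $C_i$ is c.e.\ uniformly in $c$, and every $c\in C$ lies in some $C_i$ because $\Phi(p_0\oplus c)$ is total. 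Given $c\in C_i$ and $p\in P$, searching for such a $\rho$ and then prepending it to $p$ — legal by upward closure — makes $\Phi\bigl((\rho^\frown p)\oplus c\bigr)$ equal $i$ followed by a member of $Q_i$; this is a single functional witnessing $Q_i\leqs P\join C_i$, whence $P\dlatimpl Q_i\leqs\degs(C_i)$. Finally $\degs(C_0)\meet\degs(C_1)=\degs\bigl(\setof{0^\frown c}{c\in C_0}\cup\setof{1^\frown c}{c\in C_1}\bigr)\leqs\degs(C)$, via the map sending $c\in C$ to $i^\frown c$ where $i$ is whichever of the two searches for $\rho$ halts first (one of them must, by the totality argument). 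Chaining these gives $(P\dlatimpl Q_0)\meet(P\dlatimpl Q_1)\leqs\degs(C_0)\meet\degs(C_1)\leqs\degs(C)$, as needed.

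The identity reductions and the bookkeeping with the $\oplus$-coding are routine; the real work is entirely in (iii), where the obstacle is uniformity — the witnesses for $Q_i\leqs P\join C_i$ and for the last reduction $C\to\setof{0^\frown c}{c\in C_0}\cup\setof{1^\frown c}{c\in C_1}$ must be single Turing functionals, not families indexed by the individual solutions $c$. That is exactly why $C_i$ has to be defined by the c.e.\ witness-search condition rather than by the tidier-looking $\setof{c\in C}{(\forall p\in P)\;\Phi(p\oplus c)(0)=i}$, which need not split $C$ uniformly, and why the leading coordinate of $\Phi$ must be recovered by dovetailing; upward Turing closure of $P$ is precisely what guarantees that a found witness $\rho$ can be deployed against every element of $P$ at once.
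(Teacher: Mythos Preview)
Your argument is correct. For (ii) it coincides with the paper's; for (i) the paper instead works directly with the explicit representative $Q\dlatimpl P:=\setof{(a)^\frown h}{(\forall g\in Q)\,\set a^{g\oplus h}\in P}$ and exhibits the two reductions $h\mapsto(\bar a)^\frown h$ and $(a)^\frown h\mapsto h$, but the content is the same as your universal-property argument.

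For (iii) the core computational trick---locate a finite prefix on the $P$-side that pins down the leading bit of the output, then prepend that prefix using upward closure---is identical, but the packaging differs. The paper builds a single reduction $\Psi$ directly out of the canonical representative of $P\dlatimpl(Q_0\meet Q_1)$: given $(a)^\frown h$, it searches for the least pair $(\sigma,i)$ with $\set a^{\sigma\oplus h}(0)=i$, manufactures an index $b$ with $\set b^{f\oplus h}(n)=\set a^{(\sigma^\frown f)\oplus h}(n+1)$, and outputs $(i,b)^\frown h\in(P\dlatimpl Q_0)\meet(P\dlatimpl Q_1)$. Your route through an arbitrary $C$ with $Q_0\meet Q_1\leqs P\join C$ and the auxiliary splitting $C_0,C_1$ is longer (and strictly speaking you only need the case $C=P\dlatimpl(Q_0\meet Q_1)$), but it has the virtue of never unpacking the concrete form of $\dlatimpl$: it would go through unchanged for any set in the dual-implication degree, whereas the paper's version is tuned to the canonical one and is correspondingly shorter.
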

 
 \begin{proof}
 For (i), set $I$ to the right-hand set. Then
 \[Q\dlatimpl P:=\setof{(a)^\frown h}{(\forall g\in Q)\;\set a^{g\oplus h}\in P}\leqs I\]
 witnessed by the mapping $h\mapsto (\bar a)^\frown h$ where $\set{\bar a}^{g\oplus h}=g\oplus h$. On the other hand, $(a)^\frown h\in(Q\dlatimpl P)\Implies h\in I$, since $\set a^{g\oplus h}\leqt g\oplus h$, so $\set a^{g\oplus h}\in P\Implies g\oplus h\in P$. Hence $I\leqs(Q\dlatimpl P)$ via the mapping $(a)^\frown h\mapsto h$.
 
 For (ii), suppose that $Q_0\meet Q_1\leqs P$, so some recursive $\Phi:P\to Q_0\meet Q_1$. Then there exist $\sigma$ and $i<2$ such that $\Phi(\sigma)(0)=i$. Set $\Psi(f)(n):=\Phi(\sigma^\frown f)(n+1)$. Then $\Psi:P\to Q_i$ because
 \[f\in P\qImplies\sigma^\frown f\in P\qImplies\Phi(\sigma^\frown f)=(i)^\frown\Psi(f)\in Q_0\meet Q_1.\]
 
 Finally, for (iii), by $(1)^\dualmarker$ above, we need only construct a recursive
 \[\Psi:\bigl(P\dlatimpl(Q_0\meet Q_1)\bigr)\to\bigl((P\dlatimpl Q_0)\meet(P\dlatimpl Q_1)\bigr).\]
 From $(a)^\frown h\in P\dlatimpl(Q_0\meet Q_1)$ we can compute a least pair $(\sigma_{a,h},i_{a,h})$ such that $\set a^{\sigma_{a,h}\oplus h}(0)=i_{a,h}$ and hence an index $b_{a,h}$ such that for any $f$,
 \[\set{b_{a,h}}^{f\oplus h}(n)=\set a^{({\sigma_{a,h}}^\frown f)}(n+1).\] 
 Set $\Psi\bigl((a)^\frown h\bigr):=(i_{a,h},b_{a,h})^\frown h$. Then for all $f\in P$, also ${\sigma_{a,h}}^\frown f\in P$, so $\set{b_{a,h}}^{f\oplus h}\in Q_{i_{a,h}}$. Hence $(b_{a,h})^\frown h\in P\dlatimpl Q_{i_{a,h}}$ and thus $\Psi\bigl((a)^\frown h\bigr)\in(P\dlatimpl Q_0)\meet (P\dlatimpl Q_1)$.
 \end{proof}
 
Finally, we review some standard algebraic notions as they apply to implicative lattices. 

\begin{definition}
For any lattice $\mathstr{L}$ and $F\incl L$, $F$ is a \dff{filter on} $\mathstr L$ iff $\emptyset\not=F\not=L$, and for all $a,b\in L$,
\begin{enumerate}
\item $a\in F\text{ and }a\leq b\qImplies b\in F$;
\item $a,b\in F\qImplies a\meet b\in F$.
\end{enumerate}
F is a \dff{prime filter on} $\mathstr L$ iff additionally
\begin{enumerate}
\item[(iii)] $a\join b\in F\qImplies a\in F\text{ or }b\in F$.
\end{enumerate}
\end{definition}

The easiest examples of filters are the \dff{principal filters}: for $e\not=\latz$,
\[H_e:=\setof{a\in L}{e\leq a}.\]
Easily, if $\mathstr L$ is distributive, then $H_e$ is prime iff $e$ is join-irreducible. More generally, for any $A\incl L$, if $A$ has the \dff{finite intersection property} ({\ssf FIP}):
\[\text{for all finite }B\incl A,\quad\bigmeet B\not=\latz,\]
then
\[H_A:=\setof{a\in L}{(\exists\text{ finite }B\incl A)\;\bigmeet B\leq a}\]
is called the filter \dff{generated by} $A$.
 
 \begin{lemma}
 For any distributive lattice $\mathstr L$ bounded below, any filter $F$ on $\mathstr L$ and any $c,d\in L$ such that $c\join d\in F$ but both $c,d\notin F$,
 \begin{enumerate}
 \item both $F\cup\set c$ and $F\cup\set d$ have the {\ssf FIP};
 \smallskip
 \item $H_{F\cup\set c}\cap H_{F\cup\set d}=F$.
 \end{enumerate}
 \end{lemma}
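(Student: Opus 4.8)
The plan is to exploit the single hypothesis $c\join d\in F$ together with distributivity throughout: essentially every step reduces to computing $(c\join d)\meet a=(c\meet a)\join(d\meet a)$ for a suitable $a\in F$. First I would record two preliminary observations. Since $\latz\leq b$ for every $b\in L$ and $F\not=L$, we have $\latz\notin F$; and since $c\notin F$ while $c\join d\in F$, neither $c$ nor $d$ equals $\latz$ (if, say, $c=\latz$ then $c\join d=d\in F$, contradicting $d\notin F$).

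For (i), take a finite $B\incl F\cup\set c$. If $B\incl F$ then $\bigmeet B\in F$ by meet-closure of $F$, so $\bigmeet B\not=\latz$. Otherwise $c\in B$ and, absorbing a fixed element of $F$ into $B$ if $B=\set c$, we may write $\bigmeet B=c\meet a$ with $a\in F$. Supposing toward a contradiction that $c\meet a=\latz$, distributivity gives $(c\join d)\meet a=(c\meet a)\join(d\meet a)=d\meet a$; but $(c\join d)\meet a\in F$, so $d\meet a\in F$ and hence $d\in F$ by upward closure --- contrary to hypothesis. Thus $F\cup\set c$ has the {\ssf FIP}, and the argument for $F\cup\set d$ is symmetric.

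For (ii), the inclusion $F\incl H_{F\cup\set c}\cap H_{F\cup\set d}$ is immediate, since each $a\in F$ is $\leq$ itself and lies in both $F\cup\set c$ and $F\cup\set d$. For the reverse inclusion, I would take $a\in H_{F\cup\set c}\cap H_{F\cup\set d}$, unwind the definitions of the generated filters (again absorbing fixed elements of $F$ to cover degenerate cases), and extract $p,q\in F$ with $c\meet p\leq a$ and $d\meet q\leq a$. Putting $r:=p\meet q\in F$, we get $c\meet r\leq a$ and $d\meet r\leq a$, so
\[(c\join d)\meet r=(c\meet r)\join(d\meet r)\leq a.\]
Since $c\join d,\,r\in F$, the left-hand side lies in $F$, whence $a\in F$.

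The calculations are routine; the only point needing a little care is the bookkeeping for the degenerate finite subsets of $F\cup\set c$ equal to $\set c$ alone, handled by noting that meeting with an arbitrary element of $F$ only decreases the relevant meet. The one conceptual observation is that a failure of the {\ssf FIP}, or of the equality in (ii), would --- via distributivity --- force one of $c,d$ into $F$.
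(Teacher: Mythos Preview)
Your argument is correct and is essentially the same as the paper's: both parts reduce, via distributivity, to the computation $(c\join d)\meet a=(c\meet a)\join(d\meet a)$ for a suitable $a\in F$, yielding in (i) the contradiction $d\in F$ and in (ii) the bound $(c\join d)\meet r\leq a$ with $(c\join d)\meet r\in F$. You are simply a bit more explicit than the paper about the degenerate finite subsets (e.g., $B=\set c$) and about the easy inclusion $F\incl H_{F\cup\set c}\cap H_{F\cup\set d}$.
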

 
 \begin{proof} For (i), suppose that $F\cup\set c$ does not have the {\ssf FIP}; since $F$ is closed under meet, there exists $a\in F$ such that $a\meet c=\latz$. But then
 \[d\geq a\meet d=(a\meet c)\join(a\meet d)=a\meet(c\join d)\in F,\]
 so $d\in F$ contrary to hypothesis. For (ii), for any $e\in H_{F\cup\set c}\cap H_{F\cup\set d}$ there exist $a,b\in F$ such that
 \[a\meet c\leq e\quad\text{and}\quad b\meet d\leq e.\]
 Then also
  $a\meet b\meet c\leq e\quad\text{and}\quad a\meet b\meet d\leq e$,
  so
\[e\geq(a\meet b\meet c)\join (a\meet b\meet d)=(a\meet b)\meet(c\join d)\in F,\]
and thus $e\in F$.
\end{proof}

A filter is called \dff{maximal in} a class $\cal G$ of filters iff $F\in{\cal G}$ but $\cal G$ contains no proper extension of $F$ --- that is there is no filter $G$ such that $F\subsetneq G\in{\cal G}$. If ${\cal G}\not=\emptyset$ is closed under unions of chains, then by Zorn's Lemma (for the infinite case) $\cal G$ has a maximal element.

\begin{corollary}\label{primesseparate}
For any distributive lattice $\mathstr L$ and any filter $F$ on $\mathstr L$,
\begin{enumerate}
\item $F$ is maximal in the class of all filters on $\mathstr L\qImplies F$ is prime;
\item for any $a,b\in L$, if $(\forall e\in F)\;e\meet a\not\leq b$, then there exists a prime filter $G\supseteq F$ such that $a\in G$ but $b\notin G$.
\end{enumerate}
\end{corollary}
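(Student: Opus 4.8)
The plan is to obtain both parts from the preceding Lemma together with Zorn's Lemma, by the standard route showing that maximal filters of a distributive lattice are prime. Part (i) is the quickest: assume $F$ is maximal among all filters on $\mathstr L$ but, for contradiction, not prime, so there are $c,d\in L$ with $c\join d\in F$ yet $c,d\notin F$. Clause (i) of the preceding Lemma yields that $F\cup\set c$ has the {\ssf FIP}, so $H_{F\cup\set c}$ is a filter, and having the {\ssf FIP} is precisely what keeps $\latz$ out of it, so $H_{F\cup\set c}\not=L$; since it contains all of $F$ together with $c\notin F$, we have $F\subsetneq H_{F\cup\set c}$, contradicting maximality of $F$.

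For part (ii) I would first enlarge $F$ to $F':=H_{F\cup\set a}$. The hypothesis $(\forall e\in F)\,e\meet a\not\leq b$ forces $e\meet a\not=\latz$ for every $e\in F$ (because $\latz\leq b$), and since $\lato\in F$ this already gives $a\not=\latz$; a routine check that every finite meet drawn from $F\cup\set a$ has the form $e$ or $e\meet a$ with $e\in F$ then shows $F\cup\set a$ has the {\ssf FIP}, so that $F'$ is a genuine filter with $F\incl F'$ and $a\in F'$. The same hypothesis gives $b\notin F'$, since an element of $F'$ below $b$ would dominate some $e\meet a$ with $e\in F$, forcing $e\meet a\leq b$.

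Finally, let $\cal G$ be the class of all filters $G$ with $F'\incl G$ and $b\notin G$. It is nonempty (it contains $F'$) and closed under unions of chains --- a chain-union of filters is again a filter, and it omits $b$, hence is proper, as soon as each member does --- so by Zorn's Lemma it has a maximal element $G$, for which $a\in F'\incl G$ and $b\notin G$. To see $G$ is prime, suppose $c\join d\in G$ with $c,d\notin G$; by the preceding Lemma $H_{G\cup\set c}$ and $H_{G\cup\set d}$ are filters whose intersection is $G$, so $b$, lying outside $G$, lies outside at least one of them, say outside $H_{G\cup\set c}$. Then $H_{G\cup\set c}\in\cal G$ and properly extends $G$ (it contains $c$), contradicting maximality; hence $G$ is prime, and it is the required filter.

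The one point demanding care is the bookkeeping in the second paragraph --- the normal form $e$ or $e\meet a$ ($e\in F$) for finite meets from $F\cup\set a$, and the two consequences that $F\cup\set a$ has the {\ssf FIP} and that $b\notin F'$ --- together with the routine observation that a chain-union of members of $\cal G$ stays a proper filter; everything else is a direct appeal to the preceding Lemma.
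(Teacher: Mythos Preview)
Your proof is correct and follows essentially the same route as the paper: part (i) is identical, and for part (ii) you form $H_{F\cup\set a}$, apply Zorn to the class of filters containing it and omitting $b$, and use the intersection clause of the preceding lemma to show the maximal element is prime. The only cosmetic difference is that the paper defines $\cal G$ directly as $\setof G{G\supseteq F,\;a\in G,\;b\notin G}$ rather than via your intermediate $F'$, but since any filter extending $F$ and containing $a$ must contain $H_{F\cup\set a}$, the two classes coincide.
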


\begin{proof}
For (i), if $F$ is maximal but not prime, then for some $c,d\notin F$, $c\join d\in F$. But then by (i) of the lemma, $H_{F\cup\set c}$ would be a proper extension of $F$, a contradiction.

For (ii), given $a$ and $b$ as described, set
\[{\cal G}:=\setof G{G\supseteq F\text{ is a filter such that }a\in G\text{ and }b\notin G}.\]
${\cal G}\not=\emptyset$ since $H_{F\cup\set a}\in{\cal G}$. Easily $\cal G$ is closed under unions of chains, so has a maximal element $\overline G$. Suppose towards a contradiction that $\overline G$ is not prime, so for some $c,d\notin \overline G$, $c\join d\in \overline G$. Then by (ii) of the lemma, $b$ does not belong to both $H_{\overline G\cup\set c}$ and $H_{\overline G\cup\set d}$, say $b\notin H_{\overline G\cup\set c}$. Then $H_{\overline G\cup\set c}$ is a proper extension of $\overline G$ in $\cal G$, contrary to the choice of $\overline G$.
\end{proof}

\begin{corollary}\label{presimpl}
For any implicative lattice $\mathstr L$, any filter $F$ on $\mathstr L$ and any $a,b\in L$,
\[(a\latimpl b)\in F\qIff (\forall \text{ prime filters }G\supseteq F)\; a\in G\Implies b\in G.\]
\end{corollary}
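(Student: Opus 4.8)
The plan is to prove this as the usual prime-filter separation statement for Heyting algebras, handling the two implications separately; throughout I use that implicative lattices are distributive, so that Corollary~\ref{primesseparate} is available for $\mathstr L$.

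For the left-to-right direction I would just unwind the defining adjunction of $\latimpl$. Taking $c=a\latimpl b$ in the equivalence $c\leq(a\latimpl b)\iff c\meet a\leq b$ yields $(a\latimpl b)\meet a\leq b$. Then, given any prime filter $G\supseteq F$ with $a\in G$: since $F\incl G$ we also have $(a\latimpl b)\in G$, hence $(a\latimpl b)\meet a\in G$ by closure of filters under $\meet$, and therefore $b\in G$ by upward closure. Note that primeness plays no role in this direction.

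For the converse I would argue contrapositively: assuming $(a\latimpl b)\notin F$, I want a prime filter $G\supseteq F$ with $a\in G$ and $b\notin G$, which will witness the failure of the right-hand side. The point is that the hypothesis of Corollary~\ref{primesseparate}(ii) is exactly what is available, namely $e\meet a\not\leq b$ for every $e\in F$: for if $e\in F$ with $e\meet a\leq b$, then the adjunction used in the other direction gives $e\leq(a\latimpl b)$, and clause~(i) of the definition of a filter forces $(a\latimpl b)\in F$, a contradiction. Corollary~\ref{primesseparate}(ii) then produces the required $G$.

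I do not expect any real obstacle here: the entire argument is the Galois connection $c\meet a\leq b\iff c\leq(a\latimpl b)$ applied once in each direction, plus the separation corollary. The only things needing a moment's attention are confirming that implicative lattices meet the distributivity hypothesis of Corollary~\ref{primesseparate} and checking that the quantified inequality $(\forall e\in F)\;e\meet a\not\leq b$ matches precisely the hypothesis of part~(ii) of that corollary.
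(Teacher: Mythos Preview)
Your proposal is correct and follows essentially the same route as the paper: the forward direction uses $a\meet(a\latimpl b)\leq b$ together with filter closure, and the converse reduces via the adjunction to the hypothesis $(\forall e\in F)\;e\meet a\not\leq b$ of Corollary~\ref{primesseparate}(ii). Your explicit remarks that primeness is unused in the forward direction and that distributivity of implicative lattices underlies the appeal to Corollary~\ref{primesseparate} are accurate and worth keeping.
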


\begin{proof}
Since $a\meet(a\latimpl b)\leq b$, if both $a$ and $(a\latimpl b)$ are in $G$, then so is $b$. Conversely, if $(a\latimpl b)\notin F$, then for each $e\in F$, $e\not\leq(a\latimpl b)$ so $e\meet a\not\leq b$. Thus the result follows by (ii) of the preceding corollary.
\end{proof}

\begin{remark}
All of these phenomena may also be described in terms of ideals: $I\incl L$ is an \dff{ideal on} $\mathstr L$ iff $\emptyset\not= I\not=L$ and for all $a,b\in L$,
\begin{enumerate}
\item $b\in I\text{ and }a\leq b\qImplies a\in I$;
\item $a,b\in I\qImplies a\join b\in I$;
\end{enumerate}
and a \dff{prime ideal} iff additionally
\begin{enumerate}
\item[(iii)] $a\meet b\in I\qImplies a\in I\text{ or }b\in I$.
\end{enumerate}
A filter (ideal) on ${\mathstr L}^\dualmarker$ is an ideal (filter) on $\mathstr L$ and $F$ is a prime filter (ideal) on $\mathstr L$ iff $L\setminus F$ is a prime ideal (filter) on $\mathstr L$.
\end{remark}

\begin{remark}
As in other branches of algebra, ideals and filters on lattices lead to factor structures. If $F$ is a filter on a lattice $\mathstr L$, then
\[a\leq_F b\dIff(\exists e\in F)\;a\meet e\leq b\]
is a reflexive and transitive relation on $L$ so
\[a\sim_F b\dIff a\leq_Fb\text{ and }b\leq_Fa\]
is an equivalence relation and $\leq_F$ induces a partial ordering on the set $L/F$ of equivalence classes. Defining $\meet_F$ and $\join_F$ in the obvious way leads to a lattice ${\mathstr L}/F$. In particular, ${\mathstr L}/H_{e}$ is isomorphic to the initial segment ${\mathstr L}[\latz,e]$. Note that if $\mathstr L$ is implicative, then $a\leq_Fb\Iff(a\latimpl b)\in F$.
\end{remark}
\end{section}

\begin{section}{Open algebras and completeness}
To prepare for the proof of Theorem K, but also for independent interest, we develop in this section some new examples of (dual-)implicative lattices and refinements of the \IPC- and \WEM-completeness theorems of Section 14. Recall that we call a partial ordering \dff{bounded (above) (below)} iff it has a greatest or least element or both.

\begin{definition}
For any partial ordering ${\mathstr P}=(P,\,\leq)$,
\begin{enumerate}
\item[\textup{(i)}]${\cal O}({\mathstr P}):=\setof{A\incl P}{(\forall a,b\in P)\;a\in A\hbox{ and }a\leq b\Implies b\in A}$;
\item[\textup{(ii)}]${\mathstr O}({\mathstr P}):=({\cal O}({\mathstr P}),\,\emptyset,\,P,\,\cap,\,\cup,\,\incl)$.
\end{enumerate}
\end{definition}

Members of ${\cal O}({\mathstr P})$ are described as the open sets in the order topology. Hence it is just a variation on the example preceding Proposition \ref{DgwIsImpl} to observe that

\begin{lemma} \label{irreducible} For every partial ordering $\mathstr P$, 
\begin{enumerate}
\item[\textup{(i)}]${\mathstr O}({\mathstr P})$ is an implicative lattice;
\item[\textup{(ii)}]if $\mathstr P$ is bounded below, then ${\mathstr O}({\mathstr P})$ is \lato-irreducible;
\item[\textup{(iii)}]if $\mathstr P$ is bounded above, then ${\mathstr O}({\mathstr P})$ is \latz-irreducible.
\end{enumerate}
\end{lemma}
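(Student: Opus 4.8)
The plan is to get all three parts as easy instances of standard facts about the order (Alexandrov) topology on $\mathstr P$, essentially as the remark before the lemma already indicates. For (i), I would first observe that ${\cal O}({\mathstr P})$ is closed under arbitrary intersections and unions --- if each $A_i$ contains, together with any $p$, all $q\geq p$, then so do $\bigcap_i A_i$ and $\bigcup_i A_i$ --- so $({\cal O}({\mathstr P}),\emptyset,P,\cap,\cup,\incl)$ is a bounded, distributive sublattice of $(\power(P),\incl)$, with $\latz=\emptyset$ and $\lato=P$. To see it is implicative, exhibit the relative pseudocomplement explicitly: for $A,B\in{\cal O}({\mathstr P})$,
\[A\latimpl B:=\setof{p\in P}{(\forall q\geq p)\;q\in A\Implies q\in B}.\]
Then three routine checks finish (i): (a) $A\latimpl B\in{\cal O}({\mathstr P})$, since the condition defining it only weakens as $p$ grows; (b) $A\cap(A\latimpl B)\incl B$, by taking $q=p$; and (c) if $C\in{\cal O}({\mathstr P})$ and $A\cap C\incl B$, then $C\incl A\latimpl B$, since for $p\in C$ every $q\geq p$ is again in $C$, so $q\in A$ gives $q\in A\cap C\incl B$. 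This is exactly the ``open sets of a space form a Heyting algebra'' computation of the example preceding Proposition \ref{DgwIsImpl}, now carried out in ${\mathstr O}({\mathstr P})$, so I would keep it short.

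For (ii), let $\bot$ be a least element of $\mathstr P$ (which is then nonempty, so $\latz\ne\lato$). The single point is that $\bot$ is absorbing from below in ${\cal O}({\mathstr P})$: an open set $A$ with $\bot\in A$ contains every $p\geq\bot$ by upward closure, hence $A=P=\lato$. So if $A\join B=A\cup B=\lato$, then $\bot\in A$ or $\bot\in B$, whence $A=\lato$ or $B=\lato$; that is, $\lato$ is join-irreducible, which is the meaning of ``${\mathstr O}({\mathstr P})$ is $\lato$-irreducible''. Part (iii) is the order-dual statement with the order-dual proof: if $\top$ is a greatest element of $\mathstr P$, then every nonempty open set contains $\top$, so two open sets cannot have empty intersection unless one of them is already empty; hence $A\meet B=A\cap B=\latz$ forces $A=\latz$ or $B=\latz$, i.e. $\latz$ is meet-irreducible.

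I do not expect a real obstacle; the content is bookkeeping. The points needing care are: making the explicit $\latimpl$ above agree with the version used in the cited example; verifying (a), that upward closure is preserved by the implication; and keeping straight which irreducibility is meant --- ``$\lato$-irreducible'' is join-irreducibility of the top and ``$\latz$-irreducible'' is meet-irreducibility of the bottom, while the complementary facts ($\latz$ is always join-irreducible and $\lato$ always meet-irreducible) hold in every lattice and impose no condition on $\mathstr P$.
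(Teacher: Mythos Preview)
Your proposal is correct and follows essentially the same route as the paper: parts (ii) and (iii) are argued exactly as the paper does (membership of the least, respectively greatest, element of $\mathstr P$ characterizes $\lato$ and $\latz$ in ${\mathstr O}({\mathstr P})$). The only cosmetic difference is in (i): the paper defines $A\latimpl B:=\bigcup\setof{C\in{\cal O}({\mathstr P})}{A\cap C\incl B}$ and declares the verification ``straightforward'', whereas you give the equivalent pointwise description $A\latimpl B=\setof{p}{(\forall q\geq p)\;q\in A\Implies q\in B}$ and spell out the three checks --- same content, slightly more detail.
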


\begin{proof}
Part (i) is straightforward using the implication operator
\[A\latimpl B:=\bigcup\setof{C\in{\cal O}({\mathstr P})}{A\cap C\incl B}.\]
If $\mathstr P$ has least element \latz, then for $A\in{\cal O}({\mathstr P})$, $A=\lato_{{\mathstr O}(\mathstr P)}=P$ iff $\latz\in A$, so if $A=B\cup C$, then one of $B$ or $C$ contains \latz. If $\mathstr P$ has greatest element \lato, then $A=\latz_{{\mathstr O}(\mathstr P)}=\emptyset$ iff $\lato\notin A$, so if $A=B\cap C$, then one of $B$ or $C$ fails to contains \lato. 
\end{proof}

Our first refinement of the completeness theorems is:

\begin{proposition}\label{IPCWEMpo}
\begin{align*}
\IPC&=\bigcap\bigsetof{\Th\bigl({\mathstr O}({\mathstr P})\bigr)}{{\mathstr P}\text{ is a finite partial ordering bounded below}};\\
\WEM&=\bigcap\bigsetof{\Th\bigl({\mathstr O}({\mathstr P})\bigr)}{{\mathstr P}\text{ is a finite bounded partial ordering}}.
\end{align*}
\end{proposition}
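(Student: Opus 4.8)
The plan is to establish each displayed identity by its two inclusions: a soundness inclusion that is immediate from Section 14's [resp.\ Section 5's] soundness results together with Lemma \ref{irreducible}, and a completeness inclusion obtained from the corresponding completeness theorem of Section 14 after one reduction step that is the real new ingredient.

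\emph{Soundness.} For every partial ordering $\mathstr P$, ${\mathstr O}({\mathstr P})$ is an implicative lattice by Lemma \ref{irreducible}(i), so $\IPC\incl\Th({\mathstr O}({\mathstr P}))$ by Proposition \ref{IPCinclThL}; intersecting over all finite $\mathstr P$ bounded below gives ``$\IPC\incl$'' in the first line. If moreover $\mathstr P$ is bounded above then ${\mathstr O}({\mathstr P})$ is \latz-irreducible by Lemma \ref{irreducible}(iii), so $\WEM\incl\Th({\mathstr O}({\mathstr P}))$ by Proposition \ref{WEMinclThL}; intersecting over all finite bounded $\mathstr P$ gives ``$\WEM\incl$'' in the second line.

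\emph{Completeness.} Here I would argue contrapositively. Suppose $\phi\notin\IPC$. By the \IPC-Completeness Theorem of Section 14 (finite model property --- every finite implicative lattice being isomorphic to ${\mathstr O}$ of its poset of join-irreducibles) there is a finite partial ordering $\mathstr Q$ with $\phi\notin\Th({\mathstr O}({\mathstr Q}))$. Adjoin to $\mathstr Q$ a new least element $\bot$, obtaining a finite partial ordering $\mathstr P$ bounded below. The only up-set of $\mathstr P$ that contains $\bot$ is $P$ itself, while every up-set of $\mathstr P$ not containing $\bot$ is exactly an up-set of $\mathstr Q$ and conversely; hence ${\cal O}({\mathstr P})={\cal O}({\mathstr Q})\cup\set P$, so ${\mathstr O}({\mathstr P})$ is just ${\mathstr O}({\mathstr Q})$ with one new top $P$ placed above the former top $Q$. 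The map $\eta\colon{\cal O}({\mathstr P})\to{\cal O}({\mathstr Q})$ given by $\eta(P):=Q$ and $\eta(A):=A$ for $A\ne P$ is then a surjection respecting $\latz$, $\lato$, $\meet$ and $\join$, and one checks it also respects $\latimpl$: for $A,B\in{\cal O}({\mathstr Q})$, when $A\not\incl B$ the implication $A\latimpl B$ is the same up-set ($\ne P$) computed in either lattice, and when $A\incl B$ it is the top of the respective lattice, which $\eta$ sends to $Q$. Thus ${\mathstr O}({\mathstr P})\rightarrowtail{\mathstr O}({\mathstr Q})$, so by the first lemma of Section 17, $\Th({\mathstr O}({\mathstr P}))\incl\Th({\mathstr O}({\mathstr Q}))$, whence $\phi\notin\Th({\mathstr O}({\mathstr P}))$ and $\phi$ lies outside the first intersection. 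For $\WEM$ the argument is identical, starting from the \WEM-Completeness Theorem, which furnishes a finite refuting $\mathstr Q$ possessing a greatest element; adjoining a least element below leaves that greatest element greatest, so the resulting $\mathstr P$ is finite and bounded, and the same $\eta$ finishes.

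\emph{Main obstacle.} The only genuinely new steps are the ``adjoin a bottom'' reduction and the verification that the collapse $\eta$ respects $\latimpl$ (so that the surjection lemma of Section 17 applies); the rest is bookkeeping with Lemma \ref{irreducible} and the soundness results of Section 5. A lesser point to watch is that the Section 14 completeness theorems really do deliver a \emph{finite} refuting structure --- and, for $\WEM$, one with a greatest element --- i.e.\ the finite model property of \IPC\ and of \WEM\ that those theorems provide.
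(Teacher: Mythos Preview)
Your argument is correct, but it diverges from the paper's at the main step. The paper does not invoke Birkhoff duality as a black box; instead it \emph{proves} the representation ${\mathstr L}\embeds{\mathstr O}({\mathstr P})$ inside the proposition, taking $\mathstr P$ to be the poset of prime filters on $\mathstr L$ ordered by inclusion and checking directly (via Corollaries~\ref{primesseparate} and~\ref{presimpl}) that $\eta(a):=\setof{F\in P}{a\in F}$ respects all the operations including $\latimpl$. The payoff is that the required boundedness of $\mathstr P$ comes for free: $\set{\lato}$ is the least prime filter precisely because $\mathstr L$ is $\lato$-irreducible, and $L\setminus\set{\latz}$ is the greatest one precisely when $\mathstr L$ is $\latz$-irreducible. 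So the paper needs no ``adjoin a bottom'' step and no appeal to the surjection lemma --- it uses only the embedding lemma (Lemma~\ref{embedtheory}).

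Your route --- take the representation as given, then repair boundedness by adding a new least point and collapsing via Lemma~17.1 --- is a legitimate alternative, and your verification that the collapse $\eta$ respects $\latimpl$ is fine. Two cautions: first, what you package as ``the \IPC-Completeness Theorem of Section~14'' is really two ingredients, the finite model property \emph{plus} the Birkhoff/Stone representation of a finite Heyting algebra as some ${\mathstr O}({\mathstr Q})$, and the second ingredient is exactly the content the paper is establishing here --- so make sure you are entitled to it from outside the proposition. Second, if you start (as the paper does) from the irreducible form of the completeness theorems, the resulting $\mathstr Q$ is already bounded below (and, in the \WEM\ case, also above), so your bottom-adjunction, while harmless, is redundant; conversely, your claim that the \WEM\ refuter $\mathstr Q$ has a greatest element tacitly uses the easy converse of Lemma~\ref{irreducible}(iii).
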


\begin{proof}
We shall show that for any finite \lato-irreducible implicative lattice $\mathstr L$, there exists a finite partial ordering $\mathstr P$ bounded below such that ${\mathstr L}\embeds{\mathstr O}({\mathstr P})$, and if $\mathstr L$ is also \latz-irreducible, then we may choose $\mathstr P$ bounded. Then by Lemma \ref{embedtheory}, $\Th\bigl({\mathstr O}({\mathstr P})\bigr)\incl\Th({\mathstr L})$, so by the completeness theorems,
\begin{align*}
\bigcap&\;\bigsetof{\Th\bigl({\mathstr O}({\mathstr P})\bigr)}{{\mathstr P}\text{ is a finite partial ordering bounded below}}\\
&\quad\incl\bigcap\bigsetof{\Th({\mathstr L})}{{\mathstr L}\hbox{ is a 
finite \lato-irreducible implicative lattice}}\\
\noalign{\smallskip}
&\quad=\IPC,
\end{align*}
and
\begin{align*}
\bigcap&\;\bigsetof{\Th\bigl({\mathstr O}({\mathstr P})\bigr)}{{\mathstr P}\text{ is a finite bounded partial ordering}}\\
&\;\incl\bigcap\bigsetof{\Th({\mathstr L})}{{\mathstr L}\hbox{ is a 
finite \latz- and \lato-irreducible implicative lattice}}\\
\noalign{\smallskip}
&\;=\WEM.
\end{align*}
The converse inclusions are immediate from the lemma.

Fix such an $\mathstr L$ and set
\begin{align*}
P&:=\setof F{F\text{ is a prime filter on }{\mathstr L}};\\
{\mathstr P}&:=(P,\;\incl).
\end{align*}
$\mathstr P$ is bounded below by the unit filter $\set{\lato}$, which is prime because \lato\ is join-irreducible, and if $\mathstr L$ is \latz-irreducible, then also $\mathstr P$ has a greatest element $L\setminus\set\latz$. 
Define $\eta:{\mathstr L}\to{\cal O}({\mathstr P})$ by
\[\eta(a):=\setof{F\in P}{a\in F}.\]
$\eta$ is injective and respects $\leq$ by Corollary \ref{primesseparate} (ii). It is straightforward to check respect of the lattice operations:
\begin{align*}
\eta(a\meet b)&=\setof{F\in P}{a\meet b\in F}\\
&=\setof {F\in P}{a\in F\text{ and }b\in F}=\eta(a)\cap\eta(b),\\
\eta(a\join b)&=\setof{F\in P}{a\join b\in F}\\
&=\setof {F\in P}{a\in F\text{ or }b\in F}=\eta(a)\cup\eta(b),
\end{align*}
and that $\eta(\latz)=\emptyset$ and $\eta(\lato)=P$.
Finally, by Corollary \ref{presimpl}, for any $F\in P$,
\begin{endproofeqnarray*}
F\in\eta(a\latimpl b)&\Iff&(\forall\text{ prime }G\supseteq F)\;G\in\eta(a)\Implies G\in\eta (b)\\
&\Iff&\eta(a)\cap\setof{G\in P}{F\incl G}\incl\eta(b)\\
&\Iff& F\in\bigl(\eta(a)\latimpl_{{\mathstr O}({\mathstr P})}\eta(b)\bigr).
\end{endproofeqnarray*}
\end{proof}

\begin{remark}
The existence of embeddings ${\mathstr L}\embeds{\mathstr O}({\mathstr P})$ in the preceding proof can be viewed as a representation theorem for implicative lattices. It is quite parallel to the well-known Stone Representation Theorem for Boolean algebras, both in statement and proof.

The proposition can also be viewed as an alternative formulation of the \dff{Kripke semantics} for intuitionistic propositional logic. Given a partial ordering ${\mathstr P}=(P,\;\leq)$ and a valuation $v:{\ssf PS}\to{\cal O}({\mathstr P})$, define
\begin{equation*}
a\forces\phi\qIff a\in v(\phi).\tag {*}
\end{equation*}
This \dff{forcing relation} on $\mathstr P$ easily satisfies the conditions $a\not\forces\bot$,
\begin{align*}
a\forces\phi\text{ and }a\leq b&\;\qImplies b\forces\phi\\
a\forces\phi\land\psi&\qIff a\forces\phi\text{ and }a\forces\psi\\
a\forces\phi\lor\psi&\qIff a\forces\phi\text{ or }a\forces\psi\\
a\forces\phi\implies\psi&\qIff(\forall b\geq a) \;b\forces\phi\Implies b\forces\psi.
\end{align*}
Any ${\mathstr M}=(P,\;\leq,\;\forces)$ with these properties is called a \dff{Kripke model}. Conversely, given a Kripke model, the function $v$ defined by (*) is a valuation.

A sentence $\phi$ is \dff{true in} $\mathstr M$  --- in symbols, $\modelof{\mathstr M}\phi$ --- iff $(\forall a\in P)\;a\forces \phi$ --- that is, $v(\phi)=P=\lato_{{\mathstr O}({\mathstr P})}$. Hence
\[\Th\bigl({\mathstr O}({\mathstr P})\bigr)=\Bigsetof\phi{\text{for all forcing relations $\forces$ on }{\mathstr P},\;\modelof{(P,\;\leq,\;\forces)}\phi}\]
so by the proposition,
\[\IPC=\Bigsetof\phi{\text{for all Kripke models }{{\mathstr M},\;\modelof{\mathstr M}\phi}},\]
which is one version of the Kripke Completeness Theorem. Of course, we have also the stronger version which restricts to Kripke models based on finite partial orders bounded below.
\end{remark}

Looking at these algebras in a slightly different way, leads to another useful algebra.

\begin{definition}
For any partial ordering ${\mathstr P}=(P,\,\leq)$,
\begin{enumerate}
\item[\textup{(i)}] for any $A\incl P$, $A^*:=\setof{b\in P}{(\exists a\in A)\;a\leq b}$;
\item[\textup{(ii)}] ${\cal O}^\omega({\mathstr P}):=\setof{A^*}{A\incl P\text{ and $A$ is finite}}$.
\end{enumerate}
\end{definition}

Obviously ${\cal O}({\mathstr P})=\setof {A^*}{A\incl P}$, so ${\cal O}^\omega({\mathstr P})\incl{\cal O}({\mathstr P})$ with equality for finite $\mathstr P$. In general, ${\cal O}^\omega({\mathstr P})$ is not naturally the domain of a lattice because it may fail to be closed under intersection. But this problem vanishes under simple natural conditions. Note below that although the notion of dual-implicativity was formally defined only for lattices it applies also to upper semi-lattices.

\begin{proposition}\label{dualimplImpl}
For any upper semi-lattice ${\mathstr P}=(P,\;\leq\;{\bf 0},\;\join)$ that is bounded below,
\begin{enumerate}
\item ${\mathstr O}^\omega({\mathstr P}):=({\cal O}^\omega({\mathstr P}),\;\emptyset,\;P,\;\cap,\;\cup,\;\incl)$ is a bounded lattice;
\item if $\mathstr P$ is dual-implicative, then ${\mathstr O}^\omega({\mathstr P})$ is implicative.
\end{enumerate}
\end{proposition}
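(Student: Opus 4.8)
The plan is to argue entirely with generators, writing each member of ${\cal O}^\omega({\mathstr P})$ as $A^*$ for a finite $A\incl P$. For (i) it suffices to check that $\emptyset$ and $P$ lie in ${\cal O}^\omega({\mathstr P})$ and that this collection is closed under $\cap$ and $\cup$. Here $\emptyset=\emptyset^*$, and since $\mathstr P$ is bounded below, $P=\set{{\bf 0}}^*$; for unions $A^*\cup B^*=(A\cup B)^*$, with $A\cup B$ finite when $A,B$ are. The only place the semilattice structure is used is the intersection, where the claim is that
\[A^*\cap B^*=\setof{a\join b}{a\in A,\ b\in B}^*,\]
because an element of $P$ lies above some $a\in A$ and some $b\in B$ exactly when it lies above $a\join b$ for such a pair; the generating set on the right being finite, $A^*\cap B^*\in{\cal O}^\omega({\mathstr P})$. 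Since ${\cal O}^\omega({\mathstr P})\incl{\cal O}({\mathstr P})$ and the latter is a (distributive) lattice, ${\mathstr O}^\omega({\mathstr P})$ is thereby a bounded sublattice, giving (i).

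For (ii), as ${\mathstr O}^\omega({\mathstr P})$ is a sublattice of ${\mathstr O}({\mathstr P})$, it is enough to show that for finite $A=\setlist a1m$ and $B=\setlist b1n$ the implication $A^*\latimpl B^*$ formed in ${\mathstr O}({\mathstr P})$ (Lemma \ref{irreducible}(i)) already belongs to ${\cal O}^\omega({\mathstr P})$: the defining equivalence $C\incl(A^*\latimpl B^*)\Iff A^*\cap C\incl B^*$ then holds a fortiori for $C\in{\cal O}^\omega({\mathstr P})$, so the restricted operation makes ${\mathstr O}^\omega({\mathstr P})$ implicative. Unwinding the description of $\latimpl$ from the proof of Lemma \ref{irreducible}, $x\in(A^*\latimpl B^*)$ iff every $y\geq x$ with $y\in A^*$ lies in $B^*$; instantiating $y=x\join a_i$, and for the converse using that $B^*$ is upward closed, this is equivalent to $x\join a_i\in B^*$ for every $i\leq m$, i.e.\ for every $i\leq m$ there is $j\leq n$ with $b_j\leq a_i\join x$. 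Now dual-implicativity of $\mathstr P$ yields $b_j\leq a_i\join x\Iff(a_i\dlatimpl b_j)\leq x$, so
\[A^*\latimpl B^*=\bigsetof{x}{(\forall i\leq m)(\exists j\leq n)\;(a_i\dlatimpl b_j)\leq x}=\bigcap_{i\leq m}\set{a_i\dlatimpl b_1,\ldots,a_i\dlatimpl b_n}^*,\]
a finite intersection of members of ${\cal O}^\omega({\mathstr P})$, hence in ${\cal O}^\omega({\mathstr P})$ by (i). The degenerate cases $m=0$ or $n=0$ are accommodated by reading the empty intersection as $P$ and the empty union as $\emptyset$; the one point to verify by hand is that $A^*\latimpl\emptyset=\emptyset$ whenever $A\neq\emptyset$, which holds because $x\join a_1\geq x$ always exists and lies in $A^*$.

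The only genuine content is the displayed identity in (ii), and within it the single step converting ``$b_j\leq a_i\join x$'' into ``$(a_i\dlatimpl b_j)\leq x$'' via dual-implicativity; everything else is bookkeeping with upward closures. I do not anticipate a real obstacle, but some care is warranted in confirming that restricting the implication of ${\mathstr O}({\mathstr P})$ to the sublattice ${\mathstr O}^\omega({\mathstr P})$ does yield an implicative lattice, and in handling the degenerate cases cleanly.
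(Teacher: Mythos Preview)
Your proof is correct and follows essentially the same approach as the paper: part (i) is identical, and for (ii) both arguments arrive at the formula $A^*\latimpl B^*=\bigcap_{a\in A}\setof{a\dlatimpl b}{b\in B}^*$ via the same chain of equivalences using dual-implicativity. The only difference is cosmetic framing---you compute the implication in ${\mathstr O}({\mathstr P})$ and observe it lands in ${\cal O}^\omega({\mathstr P})$, whereas the paper posits the formula and verifies the adjunction $C^*\incl A^*\latimpl B^*\Iff A^*\cap C^*\incl B^*$ directly---but the underlying calculation is the same.
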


\begin{proof}
Fix $\mathstr P$ as described. Clearly ${\cal O}^\omega({\mathstr P})$ is closed under union and contains the least element $\emptyset$. The greatest element $P=\set{{\bf 0}}^*\in{\cal O}^\omega({\mathstr P})$, and for finite $A,B\incl P$,
\[A^*\cap B^*=\setof{a\join b}{a\in A\text{ and }b\in B}^*\in {\cal O}^\omega({\mathstr P}).\]
Thus ${\mathstr O}^\omega({\mathstr P})$ is a lattice. Suppose that $\mathstr P$ is dual-implicative via $\dlatimpl$. Then 
\[A^*\latimpl B^*:=\bigcap_{a\in A}\setof{a\dlatimpl b}{b\in B}^*\]
is an implication operator for ${\mathstr O}^\omega({\mathstr P})$ because for all finite $A,B,C\incl P$,
\begin{endproofeqnarray*}
C^*\incl A^*\latimpl B^*&\Ifff&(\forall c\in C)(\forall a\in A)(\exists b\in B)\;(a\dlatimpl b)\leq c\\
&\Ifff&(\forall c\in C)(\forall a\in A)(\exists b\in B)\;b \leq a\join c\\
&\Ifff&(\forall c\in C)(\forall a\in A)\;a\join c\in B^*\\
&\Ifff&A^*\cap C^*\incl B^*.
\end{endproofeqnarray*}
\end{proof}

Next we look at some particular choices for $\mathstr P$; here, as usual, $n=\set{0,1,\ldots,n-1}$. Much of the material of the rest of this section is taken from Maksimova et. al. \cite{maksetal}.

\begin{definition}
For all $n>0$,
\begin{enumerate}
\item ${\mathstr P}_n:=\bigl(\power(n),\,\supseteq\bigr)$;\qquad ${\mathstr P}_n^-:=\bigl(\power(n)\setminus\set\emptyset,\,\supseteq\bigr)$;
\medskip
\item ${\mathstr O}_n:={\mathstr O}({\mathstr P}_n)$;\qquad ${\mathstr O}_n^-:={\mathstr O}({\mathstr P}_n^-)$;
\medskip
\item ${\mathstr P}_\omega:=\bigl(\setof{a\incl\omega}{a\text{ is finite or cofinite}},\;\supseteq\bigr)$;
\medskip
\item ${\mathstr O}_\omega:={\mathstr O}^\omega({\mathstr P}_\omega)$.
\end{enumerate}
\end{definition}

Of course, in (iv), the hypotheses of Proposition \ref{dualimplImpl} are satisfied because ${\mathstr P}_\omega$ is a Boolean algebra.
Immediately from Proposition \ref{irreducible},

\begin{corollary}
For all $n>0$,
\begin{enumerate}
\item ${\mathstr O}^-_n$ is a \lato-irreducible implicative lattice;
\item ${\mathstr O}_n$ and ${\mathstr O}_\omega$ are  \latz-\ and \lato-irreducible implicative lattices.\noproof
\end{enumerate}
\end{corollary}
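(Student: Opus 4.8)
The final statement is the corollary asserting, for all $n>0$:
\begin{enumerate}
\item ${\mathstr O}^-_n$ is a \lato-irreducible implicative lattice;
\item ${\mathstr O}_n$ and ${\mathstr O}_\omega$ are \latz- and \lato-irreducible implicative lattices.
\end{enumerate}

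My plan is simply to read off each assertion from Lemma~\ref{irreducible} (the lemma labelled \texttt{irreducible}) by verifying the hypothesis ``bounded below'' or ``bounded above'' for each partial ordering in sight; indeed the corollary is explicitly advertised as ``Immediately from Proposition~\ref{irreducible}.'' So the proof reduces to three little bookkeeping checks about the orders ${\mathstr P}_n^-$, ${\mathstr P}_n$, and ${\mathstr P}_\omega$, all of which are orderings by \emph{reverse} inclusion $\supseteq$.

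First I would handle (i). The order ${\mathstr P}_n^- = (\power(n)\setminus\set\emptyset,\ \supseteq)$ is bounded below in the $\supseteq$-order: its $\supseteq$-least element is $n$ itself (equivalently $\set{0,1,\ldots,n-1}$), since every nonempty $a\incl n$ satisfies $a\supseteq$ nothing smaller but $n\supseteq a$, i.e. $n$ is $\supseteq$-below all others — wait, I must be careful with the direction. In $(\,\cdot\,,\supseteq)$, ``$x$ least'' means $x\supseteq y$ for all $y$, i.e. $x$ is set-theoretically largest; that is $x=n$. It has no $\supseteq$-greatest element once $\emptyset$ is removed (the set-theoretically smallest nonempty sets are the singletons, and there is no single least one when $n>1$; when $n=1$ it happens to be bounded above too, but we don't need that). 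So ${\mathstr P}_n^-$ is bounded below, and Lemma~\ref{irreducible}(i),(ii) give that ${\mathstr O}^-_n={\mathstr O}({\mathstr P}_n^-)$ is a \lato-irreducible implicative lattice.

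Next, for (ii): ${\mathstr P}_n=(\power(n),\ \supseteq)$ has $\supseteq$-least element $n$ (set-theoretically largest) and $\supseteq$-greatest element $\emptyset$ (set-theoretically smallest), so it is bounded both below and above; Lemma~\ref{irreducible}(i),(ii),(iii) give that ${\mathstr O}_n$ is a \latz- and \lato-irreducible implicative lattice. For ${\mathstr O}_\omega={\mathstr O}^\omega({\mathstr P}_\omega)$ there is a slight wrinkle: ${\mathstr O}^\omega$, not ${\mathstr O}$, is in play, so Lemma~\ref{irreducible} does not literally apply and I would instead invoke Proposition~\ref{dualimplImpl} (which has just been cited) together with a direct irreducibility check, or note that irreducibility of the bounds passes from ${\mathstr O}({\mathstr P}_\omega)$ down to the sublattice ${\mathstr O}^\omega({\mathstr P}_\omega)$ since the latter contains $\emptyset=\latz$ and $\omega=\set\emptyset^*=\lato$ and is closed under the ambient $\cap$ and $\cup$. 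Concretely: ${\mathstr P}_\omega$ is the Boolean algebra of finite/cofinite subsets of $\omega$ under $\supseteq$, with $\supseteq$-least element $\omega$ and $\supseteq$-greatest element $\emptyset$; being Boolean it is in particular dual-implicative and bounded below, so Proposition~\ref{dualimplImpl} gives that ${\mathstr O}_\omega$ is a bounded implicative lattice, and the same argument as in Lemma~\ref{irreducible}(ii),(iii) — $A^*=\lato$ iff $\omega$'s $\supseteq$-least point lies in $A^*$, $A^*=\latz$ iff $\emptyset$'s $\supseteq$-greatest point does not — shows it is \latz- and \lato-irreducible.

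The only ``obstacle'' worth flagging is the direction-of-inequality trap: because all three orders use $\supseteq$ rather than $\incl$, ``bounded below'' corresponds to having a set-theoretically \emph{largest} member and ``bounded above'' to having a set-theoretically \emph{smallest} member, and in ${\mathstr P}_n^-$ the point of deleting $\emptyset$ is precisely to destroy the upper bound while keeping the lower bound — matching the one-sided conclusion in clause (i). Once that is kept straight, each clause is a one-line appeal to the cited results; I would write the proof as three sentences (possibly one), with no displayed computation needed. Since the proof is this short, I would simply write: ``${\mathstr P}_n^-$ is bounded below but (for $n>1$) not above, ${\mathstr P}_n$ is bounded, and ${\mathstr P}_\omega$ is a bounded Boolean algebra, whence (i)–(ii) follow from Proposition~\ref{irreducible} and Proposition~\ref{dualimplImpl}.''
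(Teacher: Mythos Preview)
Your proposal is correct and matches the paper's approach exactly: the paper gives no proof beyond the preamble ``Immediately from Proposition~\ref{irreducible}'' (together with the preceding sentence noting that ${\mathstr P}_\omega$ is a Boolean algebra, so Proposition~\ref{dualimplImpl} applies). Your observation that Lemma~\ref{irreducible} does not literally cover ${\mathstr O}_\omega={\mathstr O}^\omega({\mathstr P}_\omega)$, and your fix via Proposition~\ref{dualimplImpl} plus the same top/bottom-element argument, is in fact more careful than the paper's one-line gloss.
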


\def\LM{{\ssf LM}}

By Propositions \ref{IPCinclThL} and \ref{WEMinclThL}, we have
\[\IPC\incl\bigcap\setof{\Th({\mathstr O}_n^-)}{n>0}\qand \WEM\incl\bigcap\setof{\Th({\mathstr O}_n)}{n>0}.\]
We shall see that the second inclusion is in fact an equality, but the first is not and leads to a new logic, which is denoted \LM\ for the Logic of Medvedev, who first considered it (with a different definition -- see \cite{MedLogic} and Section 6 of Gabbay \cite{Gabbay}).

\begin{definition}
$\LM:=\bigcap\setof{\Th({\mathstr O}_n^-)}{n>0}$.
\end{definition}

\reldef0\conecovers[\buildrel\bigtriangleup\over\longrightarrow]

\begin{definition}
For any partial orderings ${\mathstr P}=(P,\,\leq_P)$ and ${\mathstr Q}=(Q,\,\leq_Q)$, ${\mathstr P}$ \dff{cone-covers} ${\mathstr Q}$ --- in symbols, ${\mathstr P}\conecovers{\mathstr Q}$ --- iff there exists a surjective function $f:P\to Q$ such that for all $a\in P$, $f$ maps \setof{b\in P}{a\leq_Pb} onto \setof{d\in Q}{f(a)\leq_Qd}. We write $f:{\mathstr P} \conecovers{\mathstr Q}$.
\end{definition}

\begin{proposition}\label{conecovertheories}
For any partial orderings ${\mathstr P}=(P,\,\leq_P)$ and ${\mathstr Q}=(Q,\,\leq_Q)$
\[{\mathstr P}\conecovers{\mathstr Q}\qImplies\Th\bigl({\mathstr O}({\mathstr P})\bigr)\incl\Th\bigl({\mathstr O}({\mathstr Q})\bigr).\]
\end{proposition}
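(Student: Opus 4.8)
The plan is to reduce this to the Embedding Theorem, Lemma \ref{embedtheory}: it suffices to exhibit a lattice embedding ${\mathstr O}({\mathstr Q})\embeds{\mathstr O}({\mathstr P})$ that respects $\latimpl$, for that lemma then yields $\Th\bigl({\mathstr O}({\mathstr P})\bigr)\incl\Th\bigl({\mathstr O}({\mathstr Q})\bigr)$. So I would fix $f:{\mathstr P}\conecovers{\mathstr Q}$ and take the preimage map $\eta(B):=\setof{a\in P}{f(a)\in B}$ for $B\in{\cal O}({\mathstr Q})$.

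First I would record two basic facts. Since $f$ maps $\setof{c\in P}{a\leq_P c}$ onto, in particular into, $\setof{d\in Q}{f(a)\leq_Q d}$, the function $f$ is monotone; hence $\eta(B)$ is upward closed whenever $B$ is, so $\eta$ does map ${\cal O}({\mathstr Q})$ into ${\cal O}({\mathstr P})$. And since $f$ is surjective, $B=f\bigl[\eta(B)\bigr]$ for every $B$, from which $\eta$ is injective and $\eta(B)\incl\eta(B')\Implies B\incl B'$. That $\eta$ sends $\emptyset,Q,\cap,\cup$ to $\emptyset,P,\cap,\cup$ is immediate from elementary properties of preimages, so $\eta$ is a lattice embedding.

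The one substantive point is that $\eta$ respects $\latimpl$. Using the implication operator of Lemma \ref{irreducible}, one checks that for open $A,A'$ it is given pointwise by $A\latimpl A'=\setof{a\in P}{(\forall b\geq_P a)\;b\in A\Implies b\in A'}$, and similarly in ${\mathstr Q}$. Unwinding the two sides, $a\in\eta(B\latimpl B')$ says $(\forall d\geq_Q f(a))\;d\in B\Implies d\in B'$, while $a\in\eta(B)\latimpl\eta(B')$ says $(\forall b\geq_P a)\;f(b)\in B\Implies f(b)\in B'$. The implication from the first to the second needs only monotonicity of $f$; the converse is precisely where the surjectivity-on-cones in the definition of $\conecovers$ enters: given $d\geq_Q f(a)$ with $d\in B$, the cone property furnishes some $b\geq_P a$ with $f(b)=d$, whence the hypothesis forces $d=f(b)\in B'$. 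Thus $\eta(B\latimpl B')=\eta(B)\latimpl\eta(B')$.

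With $\eta$ established as an embedding of implicative lattices, Lemma \ref{embedtheory} finishes the argument. I expect essentially all the difficulty to be concentrated in the $\latimpl$ clause, and within it in the ``onto'' half of cone-covering; everything else is routine manipulation of preimages.
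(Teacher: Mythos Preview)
Your proof is correct and takes essentially the same route as the paper: both use the preimage map $f^{-1}:{\cal O}({\mathstr Q})\to{\cal O}({\mathstr P})$, verify it preserves the lattice operations trivially, and show the $\latimpl$ clause using exactly the cone-surjectivity you isolate. The only difference is packaging: you invoke Lemma~\ref{embedtheory} to pass from the embedding to the theory inclusion, whereas the paper unpacks that lemma inline by composing $f^{-1}$ with valuations directly.
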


\begin{proof}
Fix a function $f$ witnessing ${\mathstr P}\conecovers{\mathstr Q}$ and for any ${\mathstr O}({\mathstr Q})$ valuation $v$, set
\[v^f(\phi):=f^{-1}(v(\phi)).\]
The properties of $f$ guarantee that $f$ is order-preserving, so easily $v^f:{\ssf PS}\to{\cal O}({\mathstr P})$. We show first that in fact $v^f$ is an ${\mathstr O}({\mathstr P})$-valuation. The conditions
\[v^f(\phi\land\psi)=v^f(\phi)\meet v^f(\psi)\qand v^f(\phi\lor\psi)=v^f(\phi)\join v^f(\psi)\]
follow from the elementary properties of inverse images: for $C,D\in{\cal O}({\mathstr Q})$,
\[f^{-1}(C\meet D)=f^{-1}(C\cap D)=f^{-1}(C)\cap f^{-1}(D)=f^{-1}(C)\meet f^{-1}(D)\]
and similarly for $\join$. Since also obviously $f^{-1}(\latz_{{\mathstr O}({\mathstr Q})})=\latz_{{\mathstr O}({\mathstr P})}$, the remaining two conditions follow once we establish that for all  $C,D\in{\cal O}({\mathstr Q})$,
\[f^{-1}(C\latimpl D)=f^{-1}(C)\latimpl f^{-1}(D),\]
where the lattice implications are respectively those of ${\mathstr O}({\mathstr Q})$ and ${\mathstr O}({\mathstr P})$. For this, we need to verify that for any $X\in{\cal O}({\mathstr P})$,
\[f^{-1}(C)\cap X\incl f^{-1}(D)\qIff X\incl f^{-1}(C\latimpl D).\]
For (\Larrow) we have by the properties of $\latimpl$ in ${\mathstr O}({\mathstr Q})$,
\[f^{-1}(C)\cap f^{-1}(C\latimpl D)=f^{-1}\bigl(C\cap(C\latimpl D)\bigr)\incl f^{-1}(D).\]
Towards (\Rarrow), suppose that $f^{-1}(C)\cap X\incl f^{-1}(D)$ and $a\in X$. Then for all $b\in P$, $a\leq_Pb\Implies b\in X$, so
\[a\leq_Pb\qand f(b)\in C\qImplies f(b)\in D.\]
By cone-covering, for each $d\geq_Qf(a)$ there exists $b\geq_Pa$ with $d=f(b)$, so we have
\[f(a)\leq_Qd\qand d\in C\qImplies d\in D.\]
Hence 
\[f(a)\in Y:=\setof{d\in Q}{f(a)\leq_Qd}\in{\cal O}({\mathstr Q}) \qand C\cap Y\incl D,\]
so $Y\incl C\latimpl D$ and in particular $f(a)\in C\latimpl D$ and thus $a\in f^{-1}(C\latimpl D)$.

Finally, we have the desired conclusion: for any propositional sentence $\phi$,
\begin{endproofeqnarray*}
\modelof{{\mathstr O}({\mathstr P})}\phi
&\qImplies&\text{for all ${\mathstr O}({\mathstr Q})$-valuations $v$, }v^f(\phi)=\lato_{{\mathstr O}({\mathstr P})}\\
&\qImplies&\text{for all ${\mathstr O}({\mathstr Q})$-valuations $v$, }v(\phi)=\lato_{{\mathstr O}({\mathstr Q})}\\
&\qImplies& \modelof{{\mathstr O}({\mathstr Q})}\phi.
\end{endproofeqnarray*}
\end{proof}

\begin{proposition}
For any finite bounded partial ordering ${\mathstr Q}$, there exists $m>0$ such that ${\mathstr P}_m\conecovers{\mathstr Q}$.
\end{proposition}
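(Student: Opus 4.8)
\noindent The plan is to prove, by induction on $|Q|$, the sharper statement that every finite bounded $\mathstr Q$ is a cone-covering image of some $\mathstr P_m$ with $m>0$; this is essentially the folklore fact that finite directed rooted frames are $p$-morphic images of finite ``cube'' frames. The key structural remark is that $\mathstr P_m$ decomposes as a product: identifying $\power(n_1+\cdots+n_s)$ with $\power(n_1)\times\cdots\times\power(n_s)$ gives $\mathstr P_{n_1+\cdots+n_s}\cong\mathstr P_{n_1}\times\cdots\times\mathstr P_{n_s}$ as partial orderings, the order being coordinatewise $\supseteq$. If $|Q|=1$ the constant map witnesses $\mathstr P_1\conecovers\mathstr Q$, so assume $|Q|\geq2$.

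Let $\latz$ be the least element of $\mathstr Q$ and $a_1,\dots,a_r$ (with $r\geq1$) the minimal elements of $Q\setminus\set{\latz}$. For each $i$ let $\mathstr R_i$ be the restriction of $\mathstr Q$ to $R_i:=\setof{q\in Q}{a_i\leq q}$; this is a finite partial ordering bounded below by $a_i$ and above by $\lato$, and $|R_i|<|Q|$ since $\latz\notin R_i$. By the induction hypothesis there are $n_i>0$ and cone-coverings $g_i:\mathstr P_{n_i}\conecovers\mathstr R_i$; since the least element of $\mathstr P_{n_i}$ is the full set $n_i$ and $g_i$ is order-preserving and onto, $g_i(n_i)=a_i$ automatically. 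I will in addition arrange that $a_i=g_i(p_i)$ for some proper subset $p_i\subsetneq n_i$: replace $\mathstr P_{n_i}$ by $\mathstr P_{n_i+1}$, precompose $g_i$ with the projection $a\mapsto a\cap n_i$ (which itself witnesses $\mathstr P_{n_i+1}\conecovers\mathstr P_{n_i}$), and relabel. I expect this padding to be the one genuinely delicate point: the $a_i$ are pairwise incomparable, so $a_i$ belongs to no $R_j$ with $j\neq i$, and hence in the map constructed below $a_i$ can be reached only through the $i$-th block, and only if some proper subset of that block already maps onto it.

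Now put $m:=n_1+\cdots+n_r+1$ and write members of $\power(m)$ as tuples $(x_1,\dots,x_r,\epsilon)$ with $x_i\in\power(n_i)$ and $\epsilon\in\power(1)$. Define $f:\power(m)\to Q$ by $f(x_1,\dots,x_r,\emptyset):=\lato$, and, when $\epsilon\neq\emptyset$, by setting $J:=\setof{i}{x_i\neq n_i}$ and letting $f(x_1,\dots,x_r,\epsilon)$ be $\latz$ when $J=\emptyset$, be $g_i(x_i)$ when $J=\set{i}$, and be $\lato$ when $|J|\geq2$. Informally the last coordinate carves off a ``top region'' of $\mathstr P_m$, and on its complement exactly one active block is permitted to decide the value; when two or more blocks are active, order-preservation forces the value to be a common upper bound of the relevant $g_i(x_i)$, and pushing it all the way up to $\lato$ is consistent.

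It remains to verify that $f$ witnesses $\mathstr P_m\conecovers\mathstr Q$. Surjectivity: $\lato=f(\emptyset,\dots,\emptyset)$, $\latz=f(n_1,\dots,n_r,1)$, and any $q\in Q\setminus\set{\latz}$ lies in some $R_i$, hence equals $g_i(x_i)$ for a proper $x_i\subsetneq n_i$ (using $g_i(n_i)=a_i$ and, when $q=a_i$, the padding above), so $f(n_1,\dots,x_i,\dots,n_r,1)=q$. Order-preservation and the ``back'' condition in the definition of cone-covering then follow from a short case analysis on $\epsilon$ and $|J|$, whose only ingredients are that each $g_i$ is an order-preserving surjection with the back condition for $\mathstr R_i$, that $R_i$ is an up-set of $\mathstr Q$, and that $J$ can only shrink as one passes to a larger element of $\mathstr P_m$. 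The two cases needing the most care are that every element lying above one with $|J|\geq2$ again maps to $\lato$, and that the up-set of a ``single active block'' element $(n_1,\dots,x_i,\dots,n_r,1)$ has image exactly $\setof{q\in R_i}{g_i(x_i)\leq q}$; both, however, reduce quickly to the back condition for the $g_i$ together with the preparation in the previous paragraph.
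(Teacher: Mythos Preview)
Your argument is correct and follows the same inductive strategy as the paper: strip off the atoms above $\latz$, cover the resulting up-sets recursively, and glue together via a product of cubes, sending any configuration in which two or more blocks are ``active'' to $\lato$. The packaging differs in two ways. First, the paper keeps $\zQ$ inside each $R_i$ (so the inductively obtained $g_i$ already hits $\zQ$) and therefore splits into a one-successor case and a multi-successor case; you exclude $\latz$ from $R_i$, add the extra coordinate $\epsilon$, and pad via the projection $a\mapsto a\cap n_i$ to guarantee each $a_i$ has a proper preimage, which lets you treat all $r\geq1$ at once. Second, the ``active block'' test is different: the paper uses $g_i(A^{(i)})\neq\zQ$, you use $x_i\neq n_i$. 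These are interchangeable once $\latz$ is handled separately, so the two proofs are really variants of the same construction.

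One slip to fix: you assert that $J$ can only \emph{shrink} as one passes to a larger element of ${\mathstr P}_m$, but in ${\mathstr P}_m=(\power(m),\supseteq)$ the opposite holds. If $x\leq y$ then $y_i\subseteq x_i$, so $x_i\subsetneq n_i$ forces $y_i\subsetneq n_i$, giving $J_x\subseteq J_y$. Fortunately it is this growth of $J$ that your argument actually uses: it is why $|J_x|\geq2$ forces $|J_y|\geq2$ (so $f(y)=\lato$), and why $|J_x|=1$ forces $|J_y|\geq1$, ruling out the possibility $f(y)=\latz$ when checking order-preservation. Had $J$ genuinely shrunk, order-preservation would in fact fail, since $g_i(x_i)\in R_i$ never equals $\latz$. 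With this correction your case analysis goes through as written.
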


\mdef0\zQ[{\bf 0}_{\mathstr Q}]
\mdef0\oQ[{\bf 1}_{\mathstr Q}]

\begin{proof}
We proceed by induction on the size of $\mathstr Q$. If $\mathstr Q$ has only one element, the conclusion is clear. Let \zQ\ and \oQ\ denote respectively the least and greatest element of $\mathstr Q$ and $e_0,\ldots,e_k$ be the immediate successors of \zQ. Suppose first that $k=0$ and set $R:=Q\setminus\set{\zQ}$, $\leq_R$ is $\leq_Q$ restricted to $R$ and ${\mathstr R}:=(R,\,\leq_R)$. By the induction hypothesis, for some $m>0$ and $g$, $g:{\mathstr P}_m\conecovers{\mathstr R}$. Then we show that $f:{\mathstr P}_{m+1}\conecovers{\mathstr Q}$ with, for $A\incl m+1$,
\begin{equation*}
f(A):=
\cases{
\zQ,			&\text{if $m\in A$ and $g(A\cap m)=e_0$};\cr
\noalign{\smallskip}
g(A\cap m),	&\text{otherwise.}\cr}
\end{equation*}
Clearly $f$ is surjective. The cone-covering condition is, for all $d\in Q$,
\begin{equation*}
f(A)\leq_Q d\qIff(\exists B\incl A)\;f(B)=d.\tag{*}
\end{equation*}
The implication (\Larrow) follows because $B\incl A\Implies B\cap m\incl A\cap m$ and $g$ is order-preserving. For (\Rarrow), if $d=\zQ$, take $B=A$; otherwise $d\in R$. If $f(A)=\zQ$, then $g(A\cap m)=e_0\leq_Rd$ and there exists $B\incl A\cap m$ with $g(B)=d$, so also $f(A)=d$. If $f(A)=g(A\cap m)$, then again the desired $B$ exists by the cone-covering property of $g$.

Now, consider the case $k>0$. For $i<k$, set $R_i:=\setof{c\in Q}{e_i\leq_Qc}\cup\set{\zQ}$, $\leq_{R_i}$ is $\leq_Q$ restricted to $R_i$ and ${\mathstr R}_i:=(R_i,\,\leq_{R_i})$. By the induction hypothesis, for all $i<k$ there exist $m_i$ and $g_i$ with $g_i:{\mathstr P}_{m_i}\conecovers{\mathstr R}_i$. Let $m:=m_0+\cdots+m_k$. We shall show that $f:{\mathstr P}_m\conecovers{\mathstr Q}$ with $f$ defined as follows. For $A\incl m$ and $i<k$, set
\begin{align*}
A^{(i)}&:=\setof{j<m_i}{m_0+\cdots+m_{i-1}+j\in A}.\\
\noalign{\smallskip}
f(A)&:=\cases{g_i\bigl(A^{(i)}\bigr),&if $(\forall j\not=i)\,g_j(A^{(j)})=\zQ$;\cr
\noalign{\smallskip}
\oQ,&if for at least two $i<k$, $g_i\bigl(A^{(i)}\bigr)\not=\zQ$.\cr}
\end{align*}
To establish (*)$(\Larrow)$ suppose that $B\incl A$. If for at least two $i<k$, $g_i\bigl(B^{(i)}\bigr)\not=\zQ$, then $f(B)=\oQ$ so $f(A)\leq_Q f(B)$. Otherwise, for some $i$, $(\forall j\not=i)\,g_j(B^{(j)})=\zQ$. Since for all $j<k$, $B^{(j)}\incl A^{(j)}$, $g_j\bigl(A^{(j)}\bigr)\leq_Q g_j\bigl(B^{(j)}\bigr)$, so also $(\forall j\not=i)\,g_j\bigl(A^{(j)}\bigr)=\zQ$, and 
\[f(A)=g_i\bigl(A^{(i)}\bigr)\leq g_i\bigl(B^{(i)}\bigr)=f(B).\]
Now, towards (*)$(\Rarrow)$, suppose that $f(A)\leq_Qd$. If for at least two $i<k$ $g_i\bigl(A^{(i)}\bigr)\not=\zQ$, then $f(A)=\oQ$ so also $d=\oQ$. Suppose that for at most one $i<k$,  $g_i\bigl(A^{(i)}\bigr)\not=\zQ$. Then either $d=\zQ$, so $f(A)=d$, or for some $i$, $e_i\leq_Qd$, $f(A)=g_i\bigl(A^{(i)}\bigr)\leq_Qd$ and $(\forall j\not=i)\,g_j\bigl(A^{(j)}\bigr)=\zQ$. Then by the cone-covering property of $g_i$, there exists $C\incl A^{(i)}$ such that $g_i(C)=d$ and $f(B)=d$ for $B$ such that $B^{(i)}=C$ and for $j\not=i$, $B^{(j)}=A^{(j)}$.
\end{proof}

\begin{corollary}\label{WEMOn}
$\WEM=\bigcap\bigsetof{\Th({\mathstr O}_n)}{n>0}.$
\end{corollary}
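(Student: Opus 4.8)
The plan is to chain together the three preceding results: the characterization of \WEM\ in Proposition \ref{IPCWEMpo}, the cone-covering theory inclusion of Proposition \ref{conecovertheories}, and the fact just proved that every finite bounded partial ordering is cone-covered by some ${\mathstr P}_m$.

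One inclusion is essentially free. Each ${\mathstr P}_n=(\power(n),\supseteq)$ is a finite partial ordering bounded both below (by $n$) and above (by $\emptyset$), so the algebras ${\mathstr O}_n={\mathstr O}({\mathstr P}_n)$ form a subfamily of those appearing on the right-hand side of the \WEM\ equation in Proposition \ref{IPCWEMpo}; hence $\WEM\incl\bigcap_{n>0}\Th({\mathstr O}_n)$ --- this was in any case already noted via Propositions \ref{IPCinclThL} and \ref{WEMinclThL}. For the reverse inclusion, I would take $\phi\in\bigcap_{n>0}\Th({\mathstr O}_n)$; by Proposition \ref{IPCWEMpo} it suffices to show $\phi\in\Th\bigl({\mathstr O}({\mathstr Q})\bigr)$ for an arbitrary finite bounded partial ordering ${\mathstr Q}$. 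Fixing such a ${\mathstr Q}$, the preceding proposition supplies $m>0$ with ${\mathstr P}_m\conecovers{\mathstr Q}$, and then Proposition \ref{conecovertheories} gives $\Th({\mathstr O}_m)=\Th\bigl({\mathstr O}({\mathstr P}_m)\bigr)\incl\Th\bigl({\mathstr O}({\mathstr Q})\bigr)$; since $\phi\in\Th({\mathstr O}_m)$ we conclude $\phi\in\Th\bigl({\mathstr O}({\mathstr Q})\bigr)$, as required.

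There is no genuine obstacle left here, since all the work has been front-loaded into the cone-covering construction and Proposition \ref{IPCWEMpo}. The only point requiring a moment's care is to note explicitly that each ${\mathstr P}_n$ is bounded \emph{above} as well as below (under $\supseteq$, with $\emptyset$ the greatest element and $n$ the least), so that it really does fall under the \WEM\ clause of Proposition \ref{IPCWEMpo} and not merely the \IPC\ clause --- which is exactly the distinction that keeps $\LM$ separate from \IPC\ in the ${\mathstr O}_n^-$ case but collapses in the ${\mathstr O}_n$ case.
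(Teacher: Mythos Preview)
Your proposal is correct and follows exactly the route the paper takes: the paper's proof reads simply ``Immediate from the preceding proposition and Propositions \ref{IPCWEMpo} and \ref{conecovertheories},'' and you have unpacked precisely that chain of implications. Your explicit remark that each ${\mathstr P}_n$ is bounded above (by $\emptyset$) as well as below (by $n$), so that the ${\mathstr O}_n$ genuinely fall under the \WEM\ clause of Proposition \ref{IPCWEMpo}, is a worthwhile clarification that the paper leaves implicit.
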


\begin{proof}
Immediate from the preceding proposition and Propositions \ref{IPCWEMpo} and  \ref{conecovertheories}.
\end{proof}

\begin{lemma}
For each $n>0$, ${\mathstr P}_{n+1}^-\conecovers{\mathstr P}_n$.
\end{lemma}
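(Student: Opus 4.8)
The plan is to exhibit an explicit function witnessing the cone-covering. Recalling that $n+1=\{0,1,\dots,n\}$ and that ${\mathstr P}_{n+1}^-$ consists of the nonempty subsets of $n+1$ ordered by $\supseteq$, while ${\mathstr P}_n=(\power(n),\supseteq)$, I would define
\[
 f:\power(n+1)\setminus\{\emptyset\}\longrightarrow\power(n),\qquad f(a):=a\setminus\{\max a\},
\]
where $\max a$ is the largest element of the finite nonempty set $a$. First I would check that $f$ really maps into $\power(n)$: every element of $a$ other than $\max a$ is $<\max a\le n$, hence $f(a)\incl\{0,\dots,n-1\}=n$. Surjectivity is then immediate: given $d\incl n$, the set $a:=d\cup\{n\}$ is nonempty with $\max a=n$, so $f(a)=d$.

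The remaining step is the cone-covering condition. Fix a nonempty $a\incl n+1$ and put $M:=\max a$, so $f(a)=a\setminus\{M\}$; I must show $f$ carries $\{b:\emptyset\neq b\incl a\}$ exactly onto $\{d:d\incl f(a)\}$. For the "$\incl$" inclusion (which also gives that $f$ is order-preserving, as noted after Proposition \ref{conecovertheories}): if $\emptyset\neq b\incl a$ then $f(b)\incl b\incl a$, and $M\notin f(b)$ — for either $\max b=M$, in which case $f(b)=b\setminus\{M\}$ omits $M$, or $\max b<M$, in which case $b$, hence $f(b)$, lies in $\{0,\dots,M-1\}$ — so $f(b)\incl a\setminus\{M\}=f(a)$. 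For the "onto" direction: given $d\incl f(a)=a\setminus\{M\}$, the set $b:=d\cup\{M\}$ is a nonempty subset of $a$ with $\max b=M$ (every element of $d$ lies in $a$ and differs from $M$, hence is $<M$), so $f(b)=b\setminus\{M\}=d$.

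I do not anticipate a genuine obstacle; the verification is elementary. The only point needing a moment's care is the choice of $\max a$ (rather than, say, $\min a$) as the deleted element: that choice is exactly what forces $f(a)\incl n$ and what makes the witness $b=d\cup\{M\}$ for the "onto" direction a subset of $a$ with the correct maximum. Combined with Proposition \ref{conecovertheories}, this lemma yields $\Th({\mathstr O}_{n+1}^-)\incl\Th({\mathstr O}_n)$, which is presumably its intended use.
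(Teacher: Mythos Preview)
Your proof is correct. The paper's argument is in the same spirit but uses a different witness: it deletes the \emph{smallest} element and then shifts indices down by one, setting $f(X):=\{\,i:i+1\in X\setminus\{\mu(X)\}\,\}$ where $\mu(X)=\min X$. The shift is needed there precisely because removing the minimum can leave $n$ in the set; your choice of deleting $\max a$ sidesteps this and is slightly cleaner, since $f(a)\incl n$ is then automatic and the cone witness $b=d\cup\{M\}$ requires no index arithmetic. Both arguments verify the same two ingredients (order-preservation and the onto condition for cones) in the same way, so the difference is cosmetic rather than structural.
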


\begin{proof}
For nonempty $X\incl n+1$, set $\mu(X):={}$the smallest element of $X$ and
\[f(X):=\bigsetof i{i+1\in X\setminus\set{\mu(X)}}.\]
We show that $f:{\mathstr P}_{n+1}^-\conecovers{\mathstr P}_n$. $f$ is surjective because, for any $Z\incl n$, $f(Y)=Z$ for $Y=\setof{i+1}{i\in Z}\cup\set0$. To see that $f$ is cone-covering, for any $W\incl f(X)$, set $Y=\setof{i+1}{i\in W}\cup\set{\mu(X)}$. Then $Y\incl X$, since
\[i+1\in Y\setminus\set{\mu(X)}\Implies i\in W\Implies i\in f(X)\Implies i+1\in X,\]
so $\mu(Y)=\mu(X)$ and thus $f(Y)=W$.
\end{proof}

\begin{corollary}
$\LM\incl\WEM$.
\end{corollary}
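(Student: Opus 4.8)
This last statement is a pure bookkeeping consequence of the preceding lemma, Proposition \ref{conecovertheories}, and Corollary \ref{WEMOn}; I anticipate no genuine obstacle.

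The plan is to record the chain
\[\LM\;=\;\bigcap\setof{\Th({\mathstr O}_n^-)}{n>0}\;\incl\;\bigcap\setof{\Th({\mathstr O}_{n+1}^-)}{n>0}\;\incl\;\bigcap\setof{\Th({\mathstr O}_n)}{n>0}\;=\;\WEM\]
and to justify each link. The first equality is simply the definition of $\LM$. The first inclusion holds because the right-hand intersection ranges over a subfamily of the index set of the left-hand one --- the indices at least $2$ --- so strictly more sets are intersected on the left. The middle inclusion is established term by term: for each fixed $n>0$ the preceding lemma provides ${\mathstr P}_{n+1}^-\conecovers{\mathstr P}_n$, and then Proposition \ref{conecovertheories} gives
\[\Th({\mathstr O}_{n+1}^-)=\Th\bigl({\mathstr O}({\mathstr P}_{n+1}^-)\bigr)\incl\Th\bigl({\mathstr O}({\mathstr P}_n)\bigr)=\Th({\mathstr O}_n).\]
The final equality is exactly Corollary \ref{WEMOn}.

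The only point that needs attention is the index shift: one invokes the lemma at $n$ while reindexing the defining intersection for $\LM$ to start at $n+1$, and one should not attempt to compare $\Th({\mathstr O}_n^-)$ with $\Th({\mathstr O}_n)$ directly. I would also note in passing that only the inclusion is claimed here; the strictness $\LM\subsetneq\WEM$ is a separate matter and not part of this statement.
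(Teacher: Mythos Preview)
Your proposal is correct and follows essentially the same route as the paper's own proof: both combine the preceding lemma with Proposition \ref{conecovertheories} to obtain $\Th({\mathstr O}_{n+1}^-)\incl\Th({\mathstr O}_n)$, and then invoke the definition of $\LM$ together with Corollary \ref{WEMOn}. You have simply made the index-shift step explicit by inserting the intermediate intersection $\bigcap_{n>0}\Th({\mathstr O}_{n+1}^-)$, which the paper leaves tacit.
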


\begin{proof}
By the lemma and Proposition \ref{conecovertheories}, $\Th({\mathstr O}_{n+1}^-)\incl\Th({\mathstr O}_n)$. Hence the conclusion follows from the definition of \LM\ and Proposition \ref{WEMOn}.
\end{proof}

We defined the notion of positive propositional sentence just before Lemma \ref{LIPCimplic}; let {\ssf Pos} denote the set of these.

\begin{proposition}\label{WEMposIPC}
$\WEM\cap{\ssf Pos}=\IPC\cap{\ssf Pos}$; hence also $\LM\cap{\ssf Pos}=\IPC\cap{\ssf Pos}$.
\end{proposition}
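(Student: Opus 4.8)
The plan is to argue by contraposition, using the two completeness results in Proposition~\ref{IPCWEMpo} and ``capping off'' a refuting Kripke model with a new top point. The inclusion $\IPC\cap{\ssf Pos}\incl\WEM\cap{\ssf Pos}$ is immediate from $\IPC\incl\WEM$, so the real content is the reverse: every positive sentence $\phi\notin\IPC$ must also fail in $\WEM$. By the first clause of Proposition~\ref{IPCWEMpo} (and the Kripke reformulation in the Remark above) there is a finite partial ordering ${\mathstr P}=(P,\,\leq)$ bounded below, with least element $\latz$, and a forcing relation $\forces$ on $\mathstr P$ such that $\modelof{(P,\,\leq,\,\forces)}\phi$ fails; by monotonicity of $\forces$ together with $\latz\leq a$ for all $a\in P$, this gives $\latz\not\forces\phi$.

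Next I would form ${\mathstr P}^+:=(P\cup\set\top,\,\leq^+)$, where $\top$ is a fresh point placed above every element of $P$; this is a finite partial ordering that is bounded, since it retains the least element $\latz$ and has greatest element $\top$. Extend $\forces$ to a forcing relation $\forces^+$ on ${\mathstr P}^+$ by keeping the forcing of atoms at the old points of $P$ unchanged and declaring $\top\forces^+ p$ for every atomic sentence $p$. Monotonicity is preserved, since the only new comparabilities are the pairs $a\leq^+\top$ and $\top$ forces every atom.

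The two facts I would establish, each by a short induction on a positive sentence $\psi$, are: \emph{(a)} $\top\forces^+\psi$ for every positive $\psi$; and \emph{(b)} $a\forces^+\psi$ iff $a\forces\psi$, for every $a\in P$ and every positive $\psi$. In (a) the atomic case is the definition of $\forces^+$, the cases of $\land$ and $\lor$ are trivial, and for $\psi_1\implies\psi_2$ one uses that $\top$ is the only point $\geq^+\top$, so $\top\forces^+(\psi_1\implies\psi_2)$ reduces to $\top\forces^+\psi_1\Implies\top\forces^+\psi_2$, which holds because $\top\forces^+\psi_2$ by the induction hypothesis. In (b) the atomic, $\land$ and $\lor$ cases are immediate; for $\psi_1\implies\psi_2$, the cone $\setof{b}{b\geq^+ a}$ adds exactly the point $\top$ to the old cone $\setof{b}{b\geq a}$ of $\mathstr P$, and by (a) that point automatically satisfies the implication, so the universal condition over $b\geq^+a$ is equivalent to the one over $b\geq a$, which by the induction hypothesis is equivalent to $a\forces(\psi_1\implies\psi_2)$. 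This is exactly where positivity is used: a subformula of the form $\neg\theta=\theta\implies\bot$ would fail at $\top$ whenever $\top\forces^+\theta$, destroying~(a).

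Applying (b) at $a=\latz$ gives $\latz\not\forces^+\phi$, so $\modelof{(P\cup\set\top,\,\leq^+,\,\forces^+)}\phi$ fails and hence $\phi\notin\Th\bigl({\mathstr O}({\mathstr P}^+)\bigr)$. Since ${\mathstr P}^+$ is a finite bounded partial ordering, the second clause of Proposition~\ref{IPCWEMpo} yields $\phi\notin\WEM$; this proves $\WEM\cap{\ssf Pos}=\IPC\cap{\ssf Pos}$. For the last assertion, recall $\IPC\incl\LM$ (from the definition of $\LM$ and the inclusion $\IPC\incl\bigcap\setof{\Th({\mathstr O}_n^-)}{n>0}$ noted earlier) and $\LM\incl\WEM$ (the corollary above); intersecting with ${\ssf Pos}$ gives $\IPC\cap{\ssf Pos}\incl\LM\cap{\ssf Pos}\incl\WEM\cap{\ssf Pos}=\IPC\cap{\ssf Pos}$, forcing equality throughout. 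I do not expect any genuine obstacle here beyond the bookkeeping in inductions (a) and (b); in particular, unlike the other results in this section, no cone-covering combinatorics is needed.
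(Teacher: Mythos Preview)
Your argument is correct. The two inductions (a) and (b) go through exactly as you describe, and the appeal to both clauses of Proposition~\ref{IPCWEMpo} closes the loop cleanly; the final sandwich $\IPC\incl\LM\incl\WEM$ intersected with ${\ssf Pos}$ is fine.

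The route, however, is genuinely different from the paper's. The paper works algebraically and in the direct direction: given $\phi\in\WEM\cap{\ssf Pos}$ and an arbitrary finite $\lato$-irreducible implicative lattice $\mathstr L$, it passes to the lattice $\lz$ obtained by adjoining a new bottom element (Lemma~\ref{lzo} records that $\lz$ is then $\latz$-irreducible), invokes the \WEM-Completeness Theorem to get $\modelof{\lz}\phi$, and then uses Lemma~\ref{zirred} to transfer validity of the positive $\phi$ back down to $\mathstr L$; the \IPC-Completeness Theorem finishes. Your construction is the Stone-dual of this: adjoining a new top $\top$ to the Kripke frame $\mathstr P$ corresponds exactly, on the open-set side, to inserting a new bottom element into ${\mathstr O}({\mathstr P})$, and your inductions (a) and (b) are the Kripke-semantic counterpart of Lemma~\ref{zirred}. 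What your version buys is self-containment within the present section --- you use only Proposition~\ref{IPCWEMpo} and elementary forcing bookkeeping, without reaching back to the lattice lemmas~\ref{lzo} and~\ref{zirred} of the published survey; what the paper's version buys is brevity, since those lemmas are already on the shelf.
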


\begin{proof}
The inclusion $(\supseteq)$ is clear. Fix $\phi\in\WEM\cap{\ssf Pos}$ and $\mathstr L$ a finite \lato-irreducible implicative lattice. By Lemma \ref{lzo}, $\lz$ is also \latz-irreducible, so by the \WEM-Completeness Theorem, $\modelof{\lz}\phi$. But then, since $\phi$ is positive, it follows from Lemma \ref{zirred} that also  $\modelof{{\mathstr L}}\phi$. Hence by the \IPC-Completeness Theorem, $\phi\in\IPC$. The second clause now follows by the preceding corollary.
\end{proof}

Next we adapt these to yield characterizations of \IPC\ in terms of intervals ${\mathstr L}[d,\lato]$. 

\begin{definition} To each propositional sentence $\phi$ we associate a positive sentence $\phi^+$ as follows. Let $n_\phi$ be the smallest $n$ such that all atomic sentences occurring in $\phi$ are among $\p0,\ldots,\p n$. Define $\psi^\phi$ recursively on subsentences of $\phi$ by:
\begin{align*}
&\p i^\phi:=\p i\quad\text{for each atomic sentence }\p i;\quad
(\psi\land\theta)^\phi:=\psi^\phi\land\theta^\phi;\\
&(\psi\lor\theta)^\phi:=\psi^\phi\lor\theta^\phi;\qquad
(\psi\implies\theta)^\phi:=\psi^\phi\implies\theta^\phi;\\
&(\lnot\psi)^\phi:=\psi^\phi\implies\p0\land\ldots\land\p{n_\phi}\land\p{n_\phi+1}.
\end{align*}
Then $\phi^+:=\phi^\phi$.
\end{definition}

\begin{lemma}
For any propositional sentence $\phi$ and any implicative lattice $\mathstr L$,
\begin{enumerate}
\item $\modelof{{\mathstr L}}\phi^+\qImplies\modelof{{\mathstr L}}\phi$;
\item $\notmodelof{{\mathstr L}}\phi^+\qImplies(\exists d\in L)\notmodelof{{\mathstr L}[d,\lato]}\phi$.
\end{enumerate}
\end{lemma}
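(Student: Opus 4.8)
The plan is to work with a valuation rather than directly with the lattice. Fix a propositional sentence $\phi$ and an implicative lattice $\mathstr L$. The key observation is the syntactic translation $\psi\mapsto\psi^\phi$: it is the identity on atomic sentences and commutes with $\land$, $\lor$, $\implies$, and differs from the identity only in that each negation $\lnot\psi$ is replaced by $\psi^\phi\implies(\p0\land\cdots\land\p{n_\phi}\land\p{n_\phi+1})$. So $\phi^+=\phi^\phi$ is positive because all occurrences of $\lnot$ (equivalently, of $\bot$) have been eliminated in favor of an implication into a conjunction of atoms.

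For part (i), suppose $\modelof{\mathstr L}\phi^+$. Given any $\mathstr L$-valuation $v$, I would define a modified valuation $v'$ that agrees with $v$ on $\p0,\ldots,\p{n_\phi}$ but sets $v'(\p{n_\phi+1}):=\latz$. Then $v'(\p0\land\cdots\land\p{n_\phi}\land\p{n_\phi+1})=\latz=v'(\bot)$, and an easy induction on subsentences of $\phi$ shows $v'(\psi^\phi)=v'(\psi)$ for every subsentence $\psi$ — the only nontrivial step being the negation case, where $v'\bigl((\lnot\psi)^\phi\bigr)=v'(\psi^\phi)\latimpl\latz=v'(\psi)\latimpl\latz=v'(\lnot\psi)$. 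Hence $v(\phi)\geq v'(\phi)=v'(\phi^\phi)=v'(\phi^+)=\lato$ — wait, more carefully: since $\phi^+\in\Th(\mathstr L)$, $v'(\phi^+)=\lato$, so $v'(\phi)=\lato$; but I want $v(\phi)=\lato$, so instead I should argue the other direction. Actually the clean route: since $v'$ is an arbitrary valuation agreeing with $v$ except at $\p{n_\phi+1}$, and $\phi$ does not contain $\p{n_\phi+1}$, we get $v(\phi)=v'(\phi)$; combined with $v'(\phi)=v'(\phi^+)=\lato$ this gives $v(\phi)=\lato$. As $v$ was arbitrary, $\modelof{\mathstr L}\phi$.

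For part (ii), suppose $\notmodelof{\mathstr L}\phi^+$, so there is an $\mathstr L$-valuation $w$ with $w(\phi^+)\neq\lato$. Set $d:=w(\p0)\meet\cdots\meet w(\p{n_\phi})\meet w(\p{n_\phi+1})$, and consider the interval ${\mathstr L}[d,\lato]$; this is again an implicative lattice (by Lemma~\ref{seglatimpl}). Define a ${\mathstr L}[d,\lato]$-valuation $\bar w$ by $\bar w(\p i):=w(\p i)\join d$, which equals $w(\p i)$ for $i\leq n_\phi+1$ since then $w(\p i)\geq d$. In ${\mathstr L}[d,\lato]$ the bottom element is $d$, which is exactly $\bar w(\p0\land\cdots\land\p{n_\phi+1})$. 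Now an induction on subsentences of $\phi$ shows $\bar w(\psi)=w(\psi^\phi)\join d$ for each subsentence $\psi$: the meet and join cases are immediate, the atomic case is by definition, and for implication one uses that the lattice implication in ${\mathstr L}[d,\lato]$ is $a\latimpl b$ capped appropriately (Lemma~\ref{seglatimpl}); for negation, $\bar w(\lnot\psi)=\bar w(\psi)\latimpl_d d=(w(\psi^\phi)\join d)\latimpl_d d$, which by the interval-implication formula equals $(w(\psi^\phi)\latimpl(\p0\land\cdots))$-evaluation joined with $d$, i.e. $w\bigl((\lnot\psi)^\phi\bigr)\join d$. Taking $\psi=\phi$: $\bar w(\phi)=w(\phi^\phi)\join d=w(\phi^+)\join d$. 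If this were $\lato$ then $w(\phi^+)\latimpl$-complement... rather: if $\bar w(\phi)=\lato$ then $w(\phi^+)\join d=\lato$; but this need not force $w(\phi^+)=\lato$, so I must be more careful — the induction should track $\bar w(\psi)$ as an element of ${\mathstr L}[d,\lato]$ directly, and the claim to prove is $\bar w(\phi)\neq\lato$ given $w(\phi^+)\neq\lato$. The right invariant is $\bar w(\psi)=w(\psi^\phi)$ whenever $w(\psi^\phi)\geq d$ (which holds for all subsentences because $\psi^\phi$, being built from atoms with index $\leq n_\phi$ together with the conjunction $\p0\land\cdots\land\p{n_\phi+1}$ under $\land,\lor,\implies$, always evaluates to something $\geq d$ — indeed every such element lies above $d$ since... hmm, implications $a\latimpl b$ need not lie above $d$).

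The honest main obstacle, then, is precisely this bookkeeping: I expect the clean statement to be that $\bar w$, the valuation $\p i\mapsto w(\p i)$ reinterpreted in ${\mathstr L}[d,\lato]$ where it makes sense (and extended arbitrarily, say by $\latz_{{\mathstr L}[d,\lato]}=d$, on atoms not occurring in $\phi$), satisfies $\bar w(\psi)=w(\psi^\phi)\meet$-or-$\join$ suitably $d$, and that the translation $(\cdot)^\phi$ was designed exactly so that evaluating $\lnot$ against $d=\bar w(\bot_{{\mathstr L}[d,\lato]})$ in the interval matches evaluating the rewritten implication in $\mathstr L$. Once the correct invariant is pinned down — I believe it is $\bar w(\psi)=w(\psi^\phi)$ for every subsentence $\psi$, using that $d\leq w(\chi)$ for each conjunct $\chi$ of $\p0\land\cdots\land\p{n_\phi+1}$ and that the interval implication formula from Lemma~\ref{seglatimpl} collapses correctly — the conclusion $\bar w(\phi)=w(\phi^+)\neq\lato$ is immediate, giving $\notmodelof{{\mathstr L}[d,\lato]}\phi$ with this particular $d$. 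So the proof is: (1) record the translation's defining clauses; (2) for (i), perturb $v$ at $\p{n_\phi+1}$ to $\latz$ and induct; (3) for (ii), set $d$ to be the relevant meet, pass to ${\mathstr L}[d,\lato]$, and induct using Lemma~\ref{seglatimpl} for the implication and negation cases.
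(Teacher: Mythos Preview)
Your approach is the same as the paper's for both parts, and part (i) is fine once you stop second-guessing yourself: the paper does exactly what you do, defining $v_\phi$ to agree with $v$ except $v_\phi(\p{n_\phi+1}):=\latz$, and proving by induction that $v_\phi(\psi^\phi)=v(\psi)$ for every subsentence $\psi$ of $\phi$ (equivalently $v_\phi(\psi^\phi)=v_\phi(\psi)$, since $\p{n_\phi+1}$ does not occur in $\psi$).

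For part (ii) your final guess at the invariant is exactly right and matches the paper: with $d:=v(\p0)\meet\cdots\meet v(\p{n_\phi+1})$ and $w$ the ${\mathstr L}[d,\lato]$-valuation given by $w(\p i):=v(\p i)$ for $i\le n_\phi+1$, one has $w(\psi)=v(\psi^\phi)$ for every subsentence $\psi$ of $\phi$. Your worry that ``implications $a\latimpl b$ need not lie above $d$'' is the one real gap, and it is unfounded in the situation at hand: if $a,b\in[d,\lato]$ then $a\meet d=d\le b$, so $d\le a\latimpl b$ and hence $a\latimpl b\in[d,\lato]$. This shows simultaneously that (a) the induction hypothesis $v(\psi^\phi)\ge d$ propagates through the $\implies$ case, and (b) the implication of ${\mathstr L}[d,\lato]$ agrees with that of $\mathstr L$ on $[d,\lato]$, so no ``$\join d$'' correction is needed anywhere. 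With this in hand the negation step is just
\[
w(\lnot\psi)=w(\psi)\latimpl\latz_{{\mathstr L}[d,\lato]}=v(\psi^\phi)\latimpl d=v\bigl((\lnot\psi)^\phi\bigr),
\]
and the conclusion $w(\phi)=v(\phi^+)\ne\lato$ follows. Drop the ``$\join d$'' detour entirely; the clean invariant is an equality, not an inequality.
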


\begin{proof}
Fix $\mathstr L$ and $\phi$. For each $\mathstr L$-valuation $v$, let $v_\phi$ be the $\mathstr L$-valuation that agrees with $v$ except that $v_\phi(\p{n_\phi+1})=\latz$. By an easy induction on subsentences $\psi$ of $\phi$, $v_\phi(\psi^\phi)=v(\psi)$, so in particular $v_\phi(\phi^+)=v(\phi)$. Thus, if $\modelof{{\mathstr L}}\phi^+$, then for each $\mathstr L$-valuation $v$, $v_\phi(\phi^+)=\lato$ so also $v(\phi)=\lato$. Hence $\modelof{{\mathstr L}}\phi$.

Towards (ii), suppose that for some $\mathstr L$-valuation $v$, $v(\phi^+)\not=\lato$. Set
\[d:=v(\p0)\meet\cdots\meet v(\p{n_\phi})\meet v(\p{n_\phi+1}).\]
There is a unique $\mathstr L[d,\lato]$-valuation $w$ such that for $i\leq n_\phi+1$, $w(\p i)=v(\p i)$ and for $i>n_\phi+1$, $w(\p i)=\lato$. Then for each subsentence $\psi$ of $\phi$, $w(\psi)=v(\psi^\phi)$, since inductively
\[w(\lnot\psi)=w(\psi)\latimpl\latz_{{\mathstr L}[d,\lato]}=v(\psi^\phi)\latimpl d=v((\lnot\psi)^\phi).\]
In particular, $w(\phi)=v(\phi^+)\not=\lato$, so $\notmodelof{{\mathstr L}[d,\lato]}\phi$.
\end{proof}

\begin{proposition}\label{intersectseg}
For any collection $\cal X$ of implicative lattices, if
\[\IPC\cap{\ssf Pos}=\bigcap_{{\mathstr L}\in{\cal X}}\Th({\mathstr L})\cap{\ssf Pos},\]
then
\[\IPC=\bigcap_{{\mathstr L}\in{\cal X}}\bigcap_{d\in L}\Th\bigl({\mathstr L}[d,\lato]\bigr).\]
\end{proposition}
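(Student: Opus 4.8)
The plan is to prove the two inclusions separately. The inclusion $\IPC\incl\bigcap_{{\mathstr L}\in{\cal X}}\bigcap_{d\in L}\Th\bigl({\mathstr L}[d,\lato]\bigr)$ is the easy direction: each interval ${\mathstr L}[d,\lato]$ is again an implicative lattice (by Lemma \ref{seglatimpl}, referenced earlier), so $\IPC\incl\Th\bigl({\mathstr L}[d,\lato]\bigr)$ for every such interval by the soundness half of the completeness theorems (equivalently, Proposition \ref{IPCinclThL}), and hence $\IPC$ is contained in the intersection. This takes only a sentence or two.

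For the reverse inclusion, fix $\phi\notin\IPC$; I must produce some ${\mathstr L}\in{\cal X}$ and some $d\in L$ with $\notmodelof{{\mathstr L}[d,\lato]}\phi$. The key observation is that $\phi\notin\IPC$ implies $\phi^+\notin\IPC$: since $\phi^+$ is a positive sentence, if $\phi^+$ were in $\IPC$ then by part (i) of the preceding lemma $\modelof{{\mathstr M}}\phi^+$ for every implicative lattice $\mathstr M$, whence $\modelof{{\mathstr M}}\phi$ for every such $\mathstr M$, contradicting $\phi\notin\IPC$ via the $\IPC$-completeness theorem. (Alternatively one can cite the lemma directly: $\modelof{{\mathstr M}}\phi^+\Rightarrow\modelof{{\mathstr M}}\phi$ over all $\mathstr M$ gives $\phi^+\in\IPC\Rightarrow\phi\in\IPC$.) Now $\phi^+\in{\ssf Pos}$, so $\phi^+\notin\IPC\cap{\ssf Pos}$; by the hypothesis $\IPC\cap{\ssf Pos}=\bigcap_{{\mathstr L}\in{\cal X}}\Th({\mathstr L})\cap{\ssf Pos}$, there must be some ${\mathstr L}\in{\cal X}$ with $\phi^+\notin\Th({\mathstr L})$, i.e. $\notmodelof{{\mathstr L}}\phi^+$.

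With this $\mathstr L$ in hand, part (ii) of the preceding lemma delivers exactly what is needed: $\notmodelof{{\mathstr L}}\phi^+$ yields a $d\in L$ with $\notmodelof{{\mathstr L}[d,\lato]}\phi$. Hence $\phi\notin\Th\bigl({\mathstr L}[d,\lato]\bigr)$ for this particular ${\mathstr L}\in{\cal X}$ and $d\in L$, so $\phi\notin\bigcap_{{\mathstr L}\in{\cal X}}\bigcap_{d\in L}\Th\bigl({\mathstr L}[d,\lato]\bigr)$, completing the contrapositive.

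The only step requiring any care is the passage $\phi\notin\IPC\Rightarrow\phi^+\notin\IPC$, which must be routed through part (i) of the lemma together with the completeness of $\IPC$ with respect to implicative lattices rather than argued syntactically; everything else is bookkeeping. I anticipate no genuine obstacle — the real content was already isolated in the definition of $\phi^+$ and the two clauses of the preceding lemma, and this proposition is just the "gluing" step that combines them with the hypothesis on $\cal X$.
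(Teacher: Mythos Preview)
Your proposal is correct and is essentially the contrapositive of the paper's own argument: the paper assumes $\phi$ lies in the big intersection, uses part (ii) of the lemma to get $\modelof{{\mathstr L}}\phi^+$ for every ${\mathstr L}\in{\cal X}$, invokes the hypothesis to conclude $\phi^+\in\IPC$, and then uses part (i) together with the \IPC-Completeness Theorem to obtain $\phi\in\IPC$. The ingredients and their roles are identical to yours.
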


\begin{proof}
Assume the hypothesis. The inclusion $(\incl)$ of the conclusion is immediate. Suppose that a sentence $\phi$ belongs to the right-hand side. Then for each ${\mathstr L}\in{\cal X}$, by (ii) of the preceding lemma, $\modelof{{\mathstr L}}\phi^+$, so since $\phi^+$ is positive, $\phi^+\in\IPC$. Hence, for {\it every} finite implicative lattice $\mathstr L$, $\modelof{{\mathstr L}}\phi^+$, so by (i) of the lemma, $\modelof{{\mathstr L}}\phi$. Thus by the \IPC-Completeness Theorem, $\phi\in\IPC$.
\end{proof}

\begin{corollary}\label{OnCompleteness}
\[\bigcap_{n>0}\bigcap_{D\in{\cal O}({\mathstr P}_n)}\Th\bigl({\mathstr O}_n[D,\lato]\bigr)
=\IPC
 =\bigcap_{n>0}\bigcap_{D\in{\cal O}({\mathstr P}_n^-)}\Th\bigl({\mathstr O}_n^-[D,\lato]\bigr).\]
 \end{corollary}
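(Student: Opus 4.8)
The plan is to obtain both equalities as immediate instances of Proposition~\ref{intersectseg}, applied once to the family $\setof{{\mathstr O}_n}{n>0}$ and once to $\setof{{\mathstr O}_n^-}{n>0}$. Since ${\mathstr O}_n={\mathstr O}({\mathstr P}_n)$ and ${\mathstr O}_n^-={\mathstr O}({\mathstr P}_n^-)$ are implicative lattices by Lemma~\ref{irreducible}(i), the only thing to verify in each case is the hypothesis of Proposition~\ref{intersectseg}, namely that intersecting the theories of the chosen lattices and then restricting to ${\ssf Pos}$ gives exactly $\IPC\cap{\ssf Pos}$.

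For the left-hand equality, I would take ${\cal X}:=\setof{{\mathstr O}_n}{n>0}$. By Corollary~\ref{WEMOn}, $\bigcap_{{\mathstr L}\in{\cal X}}\Th({\mathstr L})=\bigcap_{n>0}\Th({\mathstr O}_n)=\WEM$, hence $\bigcap_{{\mathstr L}\in{\cal X}}\Th({\mathstr L})\cap{\ssf Pos}=\WEM\cap{\ssf Pos}$, which equals $\IPC\cap{\ssf Pos}$ by Proposition~\ref{WEMposIPC}. So Proposition~\ref{intersectseg} applies and gives $\IPC=\bigcap_{n>0}\bigcap_{D\in{\cal O}({\mathstr P}_n)}\Th\bigl({\mathstr O}_n[D,\lato]\bigr)$, noting that the domain of ${\mathstr O}_n$ is precisely ${\cal O}({\mathstr P}_n)$, so the index $D$ ranges over exactly the set displayed.

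For the right-hand equality, I would take ${\cal X}:=\setof{{\mathstr O}_n^-}{n>0}$. Here $\bigcap_{{\mathstr L}\in{\cal X}}\Th({\mathstr L})=\bigcap_{n>0}\Th({\mathstr O}_n^-)=\LM$ by the definition of $\LM$, so $\bigcap_{{\mathstr L}\in{\cal X}}\Th({\mathstr L})\cap{\ssf Pos}=\LM\cap{\ssf Pos}=\IPC\cap{\ssf Pos}$, again by Proposition~\ref{WEMposIPC}. Proposition~\ref{intersectseg} then yields $\IPC=\bigcap_{n>0}\bigcap_{D\in{\cal O}({\mathstr P}_n^-)}\Th\bigl({\mathstr O}_n^-[D,\lato]\bigr)$.

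There is no real obstacle at this stage: all the substantive content has already been packaged into Corollary~\ref{WEMOn}, the definition of $\LM$, Proposition~\ref{WEMposIPC}, and Proposition~\ref{intersectseg}. The only point deserving care is bookkeeping of the index sets — making sure that when Proposition~\ref{intersectseg} is read with ${\mathstr L}={\mathstr O}_n$ (resp. ${\mathstr O}_n^-$), the inner intersection over $d\in L$ really is the intersection over $D\in{\cal O}({\mathstr P}_n)$ (resp. $D\in{\cal O}({\mathstr P}_n^-)$) appearing in the statement.
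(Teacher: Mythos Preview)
Your proposal is correct and is exactly the paper's approach: the paper's proof is the one line ``Immediate from Corollary~\ref{WEMOn} and Propositions~\ref{WEMposIPC} and \ref{intersectseg},'' and you have simply unpacked how these three results combine. Your observation that the inner index set $d\in L$ of Proposition~\ref{intersectseg} specializes to $D\in{\cal O}({\mathstr P}_n)$ (resp.\ $D\in{\cal O}({\mathstr P}_n^-)$) is the only bookkeeping needed, and it is correct.
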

 
 \begin{proof}
 Immediate from Corollary \ref{WEMOn} and Propositions \ref{WEMposIPC} and \ref{intersectseg}.
 \end{proof}
 
 \begin{corollary}
 \begin{align*}
 \bigcap_{\dgr\in\Dgs}\Th^\dualmarker\bigl(\Dgs[\latz,\dgr]\bigr)&=\IPC\\
 \bigcap_{\dgr\in\Dgw}\Th^\dualmarker\bigl(\Dgw[\latz,\dgr]\bigr)&=\IPC\\
 \bigcap_{\dgr\in\Dgw}\Th\bigl(\Dgw[\dgr,\lato]\bigr)&=\IPC.
 \end{align*}
 \end{corollary}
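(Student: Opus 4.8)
The plan is to derive all three equalities from Proposition~\ref{intersectseg}, applied in turn to the single implicative lattices ${\mathstr M}=\Dgs^\dualmarker$, ${\mathstr M}=\Dgw^\dualmarker$, and ${\mathstr M}=\Dgw$. That these are implicative lattices is clear: $\Dgw$ is implicative by Lemma~\ref{DgwIsImpl}, and both $\Dgs$ and $\Dgw$ are dual-implicative, so $\Dgs^\dualmarker$ and $\Dgw^\dualmarker$ are implicative. For the collection $\{{\mathstr M}\}$, Proposition~\ref{intersectseg} has conclusion $\IPC=\bigcap_{d\in M}\Th({\mathstr M}[d,\lato])$; since reversing the order identifies the final segment $\Dgs^\dualmarker[d,\lato]$ with $\Dgs[\latz,d]^\dualmarker$ (the top of $\Dgs^\dualmarker$ being $\latz_{\Dgs}$), and likewise $\Dgw^\dualmarker[d,\lato]$ with $\Dgw[\latz,d]^\dualmarker$, these conclusions are respectively the first line, the second line, and --- with no dualizing needed --- the third line of the corollary.

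It remains to verify, for ${\mathstr M}\in\{\Dgs^\dualmarker,\Dgw^\dualmarker,\Dgw\}$, the hypothesis $\IPC\cap{\ssf Pos}=\Th({\mathstr M})\cap{\ssf Pos}$ of Proposition~\ref{intersectseg}. This holds whenever $\IPC\incl\Th({\mathstr M})\incl\WEM$: then $\IPC\cap{\ssf Pos}\incl\Th({\mathstr M})\cap{\ssf Pos}\incl\WEM\cap{\ssf Pos}$, and the two ends coincide by Proposition~\ref{WEMposIPC}. The inclusions $\IPC\incl\Th^\dualmarker(\Dgs)$, $\IPC\incl\Th^\dualmarker(\Dgw)$, $\IPC\incl\Th(\Dgw)$ are soundness (Proposition~\ref{IPCinclThL}); the reverse inclusions $\Th^\dualmarker(\Dgs)\incl\WEM$, $\Th^\dualmarker(\Dgw)\incl\WEM$, $\Th(\Dgw)\incl\WEM$ are the classical bounds placing the propositional logics of the Medvedev and Mu\v cnik lattices inside the logic of weak excluded middle (recorded in the survey; indeed it may well be that $\Th(\Dgw)=\IPC$ outright, in which case the third line is immediate already with $\dgr=\latz$). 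Granting these, the corollary follows.

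Thus the argument proper is light, and the difficulty --- such as it is --- is entirely in those three $\WEM$-upper-bounds (and the dual-implicativity of $\Dgw$ used above). If one would rather not cite them, the self-contained alternative is to run through Corollary~\ref{OnCompleteness}: write $\IPC=\bigcap_{n>0}\bigcap_{D}\Th\bigl({\mathstr O}_n^-[D,\lato]\bigr)$, observe each ${\mathstr O}_n^-[D,\lato]$ is a finite implicative lattice (Lemma~\ref{seglatimpl}), apply the Dual Embedding Theorem to its dual to obtain, for each $n$ and $D$, a degree $\dgr$ and an $\latimpl$-respecting embedding ${\mathstr O}_n^-[D,\lato]\embeds\Dgs[\latz,\dgr]^\dualmarker$ (a routine check via the interval formula of Lemma~\ref{seglatimpl} reduces this to the Dual Embedding Theorem's embedding of $\bigl({\mathstr O}_n^-[D,\lato]\bigr)^\dualmarker$ into $\Dgs$), and conclude $\Th^\dualmarker(\Dgs[\latz,\dgr])\incl\Th\bigl({\mathstr O}_n^-[D,\lato]\bigr)$ by Lemma~\ref{embedtheory}; intersecting over $\dgr$, together with soundness for the reverse inclusion, recovers the first line, and the second and third lines go the same way via the Dual Embedding Theorem and the Embedding Theorem for $\Dgw$. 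In either case the corollary is a short combination of results already in hand; the work lives in Proposition~\ref{intersectseg} (or Corollary~\ref{OnCompleteness}), in Proposition~\ref{WEMposIPC}, and --- for the second route --- in the embedding theorems.
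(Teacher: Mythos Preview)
Your main route is exactly the paper's: apply Proposition~\ref{intersectseg} to the singleton collections $\{\Dgs^\dualmarker\}$, $\{\Dgw^\dualmarker\}$, $\{\Dgw\}$, verify the positive-fragment hypothesis via Theorem~J together with Proposition~\ref{WEMposIPC}, and dualize the interval $\Dgs^\dualmarker[\dgr,\lato]=\Dgs[\latz,\dgr]^\dualmarker$ to read off the stated form. The paper cites Theorem~J to get $\Th(\Dgs^\dualmarker)\cap{\ssf Pos}=\WEM\cap{\ssf Pos}$ directly (in fact Theorem~J gives equality $\Th(\Dgs^\dualmarker)=\Thd(\Dgw)=\Th(\Dgw)=\WEM$, so your parenthetical speculation that $\Th(\Dgw)$ might be $\IPC$ is off), whereas you sandwich $\IPC\incl\Th({\mathstr M})\incl\WEM$; but once Theorem~J is invoked these come to the same thing. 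Your alternative route via Corollary~\ref{OnCompleteness} and the embedding theorems is not in the paper and is more laborious, but it is a legitimate bypass of Theorem~J.
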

 
 \begin{proof}
 This follows by Theorem J and Propositions \ref{WEMposIPC} and \ref{intersectseg}. For example, since
 \[\Th\bigl(\Dgs^\dualmarker\bigr)\cap{\ssf Pos}=\WEM\cap{\ssf Pos}=\IPC\cap{\ssf Pos},\]
 we have
 \begin{endproofeqnarray*}
 \IPC&=\bigcap_{\dgr\in\Dgs}\Th\bigl(\Dgs^\dualmarker[\dgr,\lato]\bigr)\\
&=\bigcap_{\dgr\in\Dgs}\Th\bigl(\Dgs[\latz,\dgr]^\dualmarker\bigr)\\
&=\bigcap_{\dgr\in\Dgs}\Thd\bigl(\Dgs[\latz,\dgr]\bigr).
\end{endproofeqnarray*}
\end{proof}
\end{section}

\begin{section}{Proof of Theorem K}
Improving on the preceding corollary, we shall construct a single strong degree \dgr\ such that $\Thd\bigl(\Dgs[\latz,\dgr]\bigr)=\IPC$. We follow generally the presentation of Skvortsova \cite{Sk}. Fix an enumeration $\functionof{(n_k,D_k)}{k\in\omega}$ of all pairs $(n,D)$ with $D\in{\cal O}_n$ so that by Corollary \ref{OnCompleteness},
\begin{equation*}
\bigcap_{k\in\omega}\Th\bigl({\mathstr O}_{n_k}[D_k,\lato]\bigr)=\IPC.\tag{1}
\end{equation*}
We shall construct strong degrees $\dgp_k$, $\dgq_k$ and $\dgr$ ($k\in\omega$) such that
\begin{equation*}
{\mathstr O}_{n_k}[D_k,\lato]^\dualmarker\dembeds\Dgs[\dgp_k,\dgq_k],\tag{2}
\end{equation*}
and
\begin{equation*}
\dgp_k\join\dgr=\dgq_k.\tag{3}
\end{equation*}
Then \[{\mathstr O}_{n_k}[D_k,\lato]\embeds\Dgs[\dgp_k,\dgq_k]^\dualmarker,\]
so by  Corollary \ref{surjectiveTh}(ii) and Lemma \ref{embedtheory},
\[\Thd\bigl(\Dgs[\latz,\dgr]\bigr)\incl\bigcap_{k\in\omega}\Thd\bigl(\Dgs[\dgp_k,\dgq_k]\bigr)
\incl\bigcap_{k\in\omega}\Th\bigl({\mathstr O}_{n_k}[D_k,\lato]\bigr)=\IPC.\]
Towards (2), note first that for each $n$ and each $D\in{\cal O}_n$,
\[{\mathstr O}_n[D,\lato]={\mathstr O}_\omega[D,\power(n)].\]
Hence, if we show that ${\mathstr O}^\dualmarker_\omega\dembeds\Dgs$, then there are $\dgp,\dgq\in\Dgs$ such that
\[{\mathstr O}_n[D,\lato]^\dualmarker={\mathstr O}_\omega^\dualmarker[\power(n),D]\dembeds\Dgs[\dgp,\dgq].\]
We then achieve (3) by careful choice of \dgp\ and \dgq\ for different pairs $(n,D)$.

The basis of the construction is the following classical result.

\begin{proposition}[Lachlan and Lebeuf {\cite{LaLe}}]
For any countable upper semi-lattice
${\mathstr P}=(P,\leq,\latz,\join)$ that is bounded below, there exists an embedding of $\mathstr P$ into the Turing degrees $\Dgt$ as an initial segment --- that is, a function $\xi:P\to\Dgt$ such that for $a,b\in P$,
\begin{enumerate}
\item $a\leq b\Ifff\xi(a)\leqt\xi(b)$;
\smallskip
\item $\xi(\latz)=\latz_T$;
\smallskip
\item $\xi(a\join b)=\xi(a)\join\xi(b)$;
\smallskip
\item ${\ssf Im}(\xi)$ is closed downward in \Dgt.\noproof
\end{enumerate}
\end{proposition}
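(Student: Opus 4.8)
\medskip
\noindent\emph{Sketch of a proof.} This is the Lachlan--Lebeuf initial-segment theorem for countable upper semi-lattices, and the plan is to prove it by the method of \emph{usl tables} together with a nested-tree construction in the style of Spector, Sacks and Lachlan. The basic combinatorial object is a finite usl table for a finite upper semi-lattice $\mathstr Q$ bounded below: a finite nonempty set $\Theta$ equipped with an equivalence relation $\equiv_a$ on $\Theta$ for each $a\in Q$, subject to the requirements that $\equiv_{\latz}$ is the total relation, that $a\leq b$ implies ${\equiv_b}\incl{\equiv_a}$, that ${\equiv_{a\join b}}={\equiv_a}\cap{\equiv_b}$, and to a homogeneity/differentiation axiom guaranteeing sufficiently many ``independent'' elements to carry out splitting arguments. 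The finite case of this theorem produces such a table for every finite $\mathstr Q$, and the reals $G_a$ below will be read off from it. The genuinely new ingredient required for a countable $\mathstr P$ is a \emph{recursive sequential representation}: a uniformly recursive increasing chain $\Theta_0\incl\Theta_1\incl\cdots$ of finite usl tables, for an increasing chain $P_0\incl P_1\incl\cdots$ of finite subsemilattices of $\mathstr P$ with $\bigcup_k P_k=P$, coherent in the sense that $\Theta_{k+1}$ restricts to $\Theta_k$ over $P_k$. One obtains this by enumerating $P$ and, at stage $k$, extending both the finite structure $P_k$ (to absorb a new element together with its joins with the old ones) and its table, verifying that all the table axioms --- the homogeneity/differentiation axiom in particular, which is not inherited automatically --- survive.

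With such a representation in hand, I would build a nested sequence of recursive trees, $T_k$ over the alphabet $\Theta_k$, with every branch of $T_{k+1}$ projecting to a branch of $T_k$; the passage from $T_k$ to $T_{k+1}$ is arranged to meet, one at a time and without injury, a reduction requirement for the $e$-th Turing functional --- forcing $\{e\}^G$, where $G$ is the branch lying on all the trees, to be partial or Turing-equivalent to $G_a$ for a single $a\in P$, or (possible only when $\mathstr P$ has no greatest element) Turing-equivalent to $G$ itself --- and a splitting requirement, forcing $a\not\leq b$ to imply that $G_b$ does not compute $G_a$. The table axioms are exactly what make these refinements possible: the differentiation axiom supplies the required splittings, and ${\equiv_{a\join b}}={\equiv_a}\cap{\equiv_b}$ keeps joins under control. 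For $a\in P$, and any $k$ with $a\in P_k$, let $G_a$ be the real whose $n$-th entry codes the $\equiv_a$-class of the appropriate initial segment of $G$; coherence makes the choice of $k$ immaterial, and plainly $G_a\leqt G$.

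The verification of (i)--(iv) then follows the finite template. Since $\equiv_{\latz}$ is total, $G_{\latz}$ is recursive, giving (ii); since ${\equiv_{a\join b}}={\equiv_a}\cap{\equiv_b}$ one has $G_{a\join b}\eqt G_a\oplus G_b$, giving (iii); and $a\leq b$ yields $G_a\leqt G_b$ at once while the splitting requirements yield the converse, so $\xi(a):=\degt(G_a)$ is a well-defined embedding of upper semi-lattices satisfying (i). For (iv), any real $X$ with $X\leqt G_a$ for some $a\in P$ equals $\{e\}^G$ for some $e$, since $X\leqt G$; as $\{e\}^G$ is total, the reduction requirement met at stage $e$ makes it Turing-equivalent either to $G_b$ for some $b\in P$ --- whence $G_b\leqt G_a$, so $b\leq a$ by (i) and $\degt(X)=\xi(b)$ --- or to $G$, which forces $G\leqt G_a$, possible only when $a$ is greatest in $\mathstr P$, in which case $\degt(X)=\degt(G)=\xi(a)$. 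Either way $\degt(X)\in{\ssf Im}(\xi)$, so ${\ssf Im}(\xi)$ is closed downward in $\Dgt$, i.e.\ $\xi$ embeds $\mathstr P$ as an initial segment; when $\mathstr P$ has a greatest element this segment is all of $\Dgt[\latz_T,\degt(G)]$.

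The step I expect to be the real obstacle is the recursive sequential representation of the first paragraph: maintaining uniform recursiveness of the chain of finite usl tables while repeatedly enlarging $P_k$ by new elements and new joins, all the while preserving the homogeneity/differentiation axiom. This is precisely the ingredient that Lachlan and Lebeuf added to the (already intricate) finite theory; granted it, the nested-tree construction and the verification of (i)--(iv), though long, run along a now-standard track.
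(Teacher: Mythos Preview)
The paper does not prove this proposition at all: it is stated as a classical result, attributed to Lachlan and Lebeuf \cite{LaLe}, and closed immediately with a \qed\ box (the \texttt{\textbackslash noproof} marker). In the context of this paper the theorem is used purely as a black box in the next proposition, so no argument is expected.

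Your sketch is, in broad outline, a faithful summary of the actual Lachlan--Lebeuf argument --- sequential usl representations, nested recursive trees, splitting and minimality requirements --- and would be appropriate if you were writing an exposition of that paper. But since the present paper simply cites the result, there is nothing here to compare your proposal against; your effort is supererogatory rather than wrong.
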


\begin{proposition}\label{PomegaEmbeds}
For any countable dual-implicative upper semi-lattice ${\mathstr P}=(P,\leq,\latz,\join,\dlatimpl)$, there exists a dual embedding $\overline\eta:{\mathstr O}^\omega({\mathstr P})^\dualmarker\dembeds\Dgs$ such that for each $D\in{\cal O}^\omega({\mathstr P})$, $\overline\eta(D)$ is the meet of finitely many upward Turing closed degrees.
\end{proposition}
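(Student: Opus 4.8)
The plan is to lift the Lachlan--Lebeuf initial-segment embedding (the preceding proposition) from $\Dgt$ into $\Dgs$ by way of upward Turing cones, sending each singleton $\set{a}^{*}$ of ${\mathstr O}^{\omega}({\mathstr P})$ to the strong degree of the Turing cone above $\xi(a)$, and a general $A^{*}$ to the $\Dgs$-meet of these cones. Concretely: apply the preceding proposition to the underlying upper semi-lattice of ${\mathstr P}$ to obtain $\xi:P\to\Dgt$; for $a\in P$ put $U_{a}:=\setof{g\in\pre\omega\omega}{\xi(a)\leqt\degt(g)}$, an upward Turing closed set with $\degs(U_{\latz})=\latz$; and for finite $A\incl P$ set $\overline\eta(A^{*}):=\bigmeet_{a\in A}\degs(U_{a})$, the empty meet being $\lato$. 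This is well defined on ${\mathstr O}^{\omega}({\mathstr P})$ by Lemma~\ref{simpleutc}(i): if $A^{*}=B^{*}$ then each $a\in A$ lies above some $b\in B$, so $\degs(U_{b})\leqs\degs(U_{a})$, and the two meets are mutually below one another. Each value is by construction a meet of finitely many upward Turing closed degrees, as required.

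I expect the one genuinely hard point to be securing, already at the level of $\xi$, a property strong enough to force $\overline\eta$ to respect implication: namely, that for all $a,b\in P$ the degree $\xi(a\dlatimpl b)$ is the \emph{least} Turing degree $\dgc$ with $\xi(b)\leqt\xi(a)\join\dgc$ --- equivalently, that the image of $\xi$ is not merely a downward-closed isomorphic copy of ${\mathstr P}$ but a \emph{dual-implicative} initial segment. Mere downward-closure of the image does not suffice: around a minimal degree that fails to be join-prime one can build a diamond initial segment in which $\xi(b)\leqt\xi(a)\join\degt(h)$ for some $h$ with $\xi(b)\not\leqt\degt(h)$, so that $\degt(h)$ is a cupping witness not lying above $\xi(a\dlatimpl b)=\xi(b)$. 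So the basic priority construction behind the preceding proposition must be refined --- using the dual-implication of ${\mathstr P}$ to diagonalize against such bad cupping witnesses, following Skvortsova \cite{Sk} --- and I would isolate this as a separate strengthened initial-segment lemma; it is the technical heart of the argument.

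Granting such a $\xi$, the remaining verifications are routine. By Lemma~\ref{simpleutc}, $a\mapsto\degs(U_{a})$ is order preserving and $\degs(U_{a\join b})=\degs(U_{a})\join\degs(U_{b})$. Hence $\overline\eta$ reverses order, $A^{*}\incl B^{*}$ forcing $\overline\eta(B^{*})\leqs\degs(U_{a})$ for every $a\in A$ and hence $\overline\eta(B^{*})\leqs\overline\eta(A^{*})$; and conversely $\overline\eta(B^{*})\leqs\overline\eta(A^{*})$ forces $A^{*}\incl B^{*}$, since each $\degs(U_{a})$ is meet-irreducible (Proposition~\ref{utcprops}(ii)) and $\Dgs$ is distributive, so for each $a\in A$ there is $b\in B$ with $\degs(U_{b})\leqs\degs(U_{a})$, whence $U_{a}\incl U_{b}$, $b\leq a$ (as $\xi$ reflects $\leq$), and $a\in B^{*}$. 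In particular $\overline\eta$ is an order-reversing injection. The bounds cross over ($\overline\eta(\emptyset)=\lato$ and $\overline\eta(P)=\overline\eta(\set{\latz}^{*})=\degs(U_{\latz})=\latz$); the join of ${\mathstr O}^{\omega}({\mathstr P})$ goes to $\meet$ because $A^{*}\cup B^{*}=(A\cup B)^{*}$; and the meet goes to $\join$ because $A^{*}\cap B^{*}=\setof{a\join b}{a\in A,\ b\in B}^{*}$, so $\overline\eta(A^{*}\cap B^{*})=\bigmeet_{a\in A}\bigmeet_{b\in B}\bigl(\degs(U_{a})\join\degs(U_{b})\bigr)=\overline\eta(A^{*})\join\overline\eta(B^{*})$ by distributivity of $\Dgs$.

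Finally, for the implication clause I use the formula $A^{*}\latimpl B^{*}=\bigcap_{a\in A}\setof{a\dlatimpl b}{b\in B}^{*}$ from the proof of Proposition~\ref{dualimplImpl}, which with the preceding paragraph gives $\overline\eta(A^{*}\latimpl B^{*})=\bigjoin_{a\in A}\bigmeet_{b\in B}\degs(U_{a\dlatimpl b})$, while clause $(4)^{\dualmarker}$ of Lemma~\ref{distprops} together with Proposition~\ref{utcprops}(iii) (each $U_{a}$ being upward Turing closed) gives $\overline\eta(A^{*})\dlatimpl\overline\eta(B^{*})=\bigjoin_{a\in A}\bigmeet_{b\in B}\bigl(\degs(U_{a})\dlatimpl\degs(U_{b})\bigr)$. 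So it suffices to prove the singleton identity $\degs(U_{a\dlatimpl b})=\degs(U_{a})\dlatimpl\degs(U_{b})$. By Proposition~\ref{utcprops}(i) the right-hand side equals $\degs\bigl(\setof{h}{\xi(b)\leqt\xi(a)\join\degt(h)}\bigr)$ --- the quantifier $(\forall g\in U_{a})$ collapsing to its instance $\degt(g)=\xi(a)$ --- which is upward Turing closed, so by Lemma~\ref{simpleutc}(i) it remains to show $\setof{h}{\xi(a\dlatimpl b)\leqt\degt(h)}=\setof{h}{\xi(b)\leqt\xi(a)\join\degt(h)}$. The inclusion $\subseteq$ is immediate from $b\leq a\join(a\dlatimpl b)$ and $\xi$-preservation of $\join$; the inclusion $\supseteq$ is exactly the least-cupper property of $\xi$ arranged in the second paragraph. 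This produces the dual embedding $\overline\eta:{\mathstr O}^{\omega}({\mathstr P})^{\dualmarker}\dembeds\Dgs$ with every value a meet of finitely many upward Turing closed degrees.
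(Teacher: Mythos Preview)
Your overall architecture matches the paper's: lift Lachlan--Lebeuf into $\Dgs$ via upward Turing closed sets, send $A^{*}$ to the meet over $a\in A$, and verify the clauses using Lemma~\ref{simpleutc}, Proposition~\ref{utcprops}, and Lemma~\ref{distprops}$(4)^{\dualmarker}$. The divergence is precisely at the point you flag as ``the technical heart,'' and there your proposal has a real gap.

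You take $U_{a}=\setof{g}{\xi(a)\leqt\degt(g)}$ and then need
\[
U_{a\dlatimpl b}\;=\;\setof{h}{\xi(b)\leqt\xi(a)\join\degt(h)},
\]
which, as you correctly observe, fails for the bare Lachlan--Lebeuf $\xi$: an $h$ with $\degt(h)\notin{\ssf Im}(\xi)$ may cup $\xi(a)$ above $\xi(b)$ without lying above $\xi(a\dlatimpl b)$. Your remedy is to posit a strengthened initial-segment lemma that diagonalizes against all such cupping witnesses. But this is a statement about \emph{every} Turing degree $\degt(h)$, not just those in the countable construction; you do not prove it, and it is not what Skvortsova does. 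So as written the argument rests on an unproved (and substantially harder) lemma.

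The paper avoids this entirely by changing the ``cone.'' Instead of $U_{a}$ it uses
\[
S_{a}:=\setof{f}{\xi(a)\leqt\degt(f)\text{ or }\degt(f)\notin{\ssf Im}(\xi)},
\]
which is still upward Turing closed (since ${\ssf Im}(\xi)$ is downward closed). Now every $f$ with $\degt(f)\notin{\ssf Im}(\xi)$ lies in \emph{every} $S_{a}$, and in particular in $S_{a\dlatimpl b}$; and for such $f$ one also has $f\in S_{a}\dlatimpl S_{b}$ automatically, since $\degt(g\oplus f)\notin{\ssf Im}(\xi)$ for all $g$. The problematic out-of-image case is thus trivialized on both sides, and for $f$ with $\degt(f)=\xi(c)\in{\ssf Im}(\xi)$ the equivalence $f\in S_{a\dlatimpl b}\Iff f\in S_{a}\dlatimpl S_{b}$ reduces exactly to the algebra $b\leq a\join c\Iff (a\dlatimpl b)\leq c$ in ${\mathstr P}$, using only that $\xi$ is an order- and join-preserving bijection onto its image. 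No refinement of Lachlan--Lebeuf is needed. Once $\eta(a):=\degs(S_{a})$ respects $\dlatimpl$, the extension $\overline\eta(A^{*}):=\bigmeet_{a\in A}\eta(a)$ and the remaining verifications go through exactly as in your last two paragraphs.
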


\begin{proof}
By Proposition \ref{dualimplImpl}, ${\mathstr O}^\omega({\mathstr P})^\dualmarker$ is dual-implicative. Fix an upper semi-lattice embedding $\xi:{\mathstr P}\to\Dgt$ as in the preceding proposition. We first transform this into an upper semi-lattice embedding $\eta:{\mathstr P}\to\Dgs$:
\[\eta(a):=\degs(S_a)\text{\quad where}\]
\[S_a:=\setof{f\in\pre\omega\omega}{\xi(a)\leqt\degt(f)\text{ or }\degt(f)\notin{\ssf Im}(\xi)}.\]
Each $S_a$ is upward Turing closed. It follows by Lemma \ref{simpleutc} that $\eta$ respects $\leq$:
\[a\leq b\Ifff\xi(a)\leqt\xi(b)\Ifff S_b\incl S_a\Ifff S_a\leqs S_b.\]
$\eta(\latz)=\degs(S_\latz)=\degs(\pre\omega\omega)=\latz_{\ssf s}$, and again by Lemma \ref{simpleutc}, $\eta$ respects $\join$:
\begin{align*}
S_{a\join b}&=\setof f{\xi(a)\join\xi(b)\leqt\degt(f)\text{ or }\degt(f)\notin{\ssf Im}(\xi)}\\
&=S_a\cap S_b\equiv_{\ssf s}S_a\join S_b.
\end{align*}
Next we show that $\eta$ respects $\dlatimpl$. For $\degt(f)\notin{\ssf Im}(\xi)$, $f\in S_{a\dlatimpl b}$ by definition, and by Proposition \ref{utcprops}, also 
\[f\in S_a\dlatimpl S_b=\setof h{(\forall g\in S_a)\;g\oplus h\in S_b},\] 
since 
\[\degt(f)\notin{\ssf Im}(\xi)\Impliess\degt(g\oplus f)\notin{\ssf Im}(\xi)\Impliess g\oplus f\in S_b.\]
For $\degt(f)\in {\ssf Im}(\xi)$, fix $c\in P$ such that $\xi(c)=\degt(f)$. Then
\begin{align*}
f\in S_{a\dlatimpl b}&\Ifff\xi(a\dlatimpl b)\leqt\degt(f)\\
&\Ifff(a\dlatimpl b)\leq c\\
&\Ifff b\leq a\join c\\
&\Ifff \xi(b)\leqt\xi(a)\join\degt(f)\\
&\Ifff \forall g[\xi(a)\leqt\degt(g)\Impliess\xi(b)\leqt\degt(f\oplus g)]\\
&\Ifff\bigl(\forall g\in S_a\cap{\ssf Im}(\xi)\bigr)\;\;\xi(b)\leqt\degt(f\oplus g)\\
&\Ifff f\in S_a\dlatimpl S_b,
\end{align*}
with the last step by Proposition \ref{utcprops} (i).

Now, extend $\eta$ to $\overline\eta:{\mathstr O}^\omega(\mathstr P)^\dualmarker\to\Dgs$ by setting for finite $A\incl P$,
\[\overline\eta(A^*):=\bigmeet_{a\in A}\eta(a).\]
It remains to show that $\overline\eta$ is well-defined and is a dual-implicative lattice embedding --- that is
\begin{enumerate}
\item $A^*\supseteq B^*\Ifff\overline\eta(A^*)\leqs\overline\eta(B^*)$;
\smallskip
\item $\overline\eta(\emptyset)=\infty_{\ssf s};\quad\overline\eta(P)=\latz_{\ssf s}$;
\smallskip
\item $\overline\eta(A^*\cap B^*)=\overline\eta(A^*)\join\overline\eta(B^*)$;
\smallskip
\item $\overline\eta(A^*\cup B^*)=\overline\eta(A^*)\meet\overline\eta(B^*)$;
\smallskip
\item $\overline\eta(A^*\latimpl B^*)=\overline\eta(A^*)\dlatimpl\overline\eta(B^*)$.
\end{enumerate}
For (i), which also implies that $\overline\eta$ is well-defined, we have
\begin{align*}
A^*\supseteq B^*&\Ifff(\forall b\in B)\;(\exists a\in A)\;a\leq b\\
&\Ifff(\forall b\in B)\;(\exists a\in A)\;\eta(a)\leqt\eta(b)\\
&\Ifff(\forall b\in B)\;\overline\eta(A^*)\leqs\eta(b)\\
&\Ifff\overline\eta(A^*)\leqs\overline\eta(B^*),
\end{align*}
where the third equivalence uses the meet irreducibility of $\eta(b)$ from Proposition \ref{utcprops}(ii). Part (ii) is immediate. For (iii), we have
\begin{align*}
\overline\eta(A^*\cap B^*)&=\overline\eta\bigl(\setof{a\join b}{a\in A\text{ and }b\in B}^*\bigr)\\
&=\bigmeet_{a\in A}\bigmeet_{b\in B}\eta(a\join b)\\
&=\bigmeet_{a\in A}\bigmeet_{b\in B}\eta(a)\join \eta(b)\\
&=\overline\eta(A^*)\join\overline\eta(B^*),
\end{align*}
by distributivity. Part (iv) is straightforward:
\[\overline\eta(A^*\cup B^*)=\overline\eta\bigl((A\cup B)^*\bigr)=\bigmeet_{a\in A\cup B}\eta(a)=\overline\eta(A^*)\meet\overline\eta(B^*).\]
Finally for (v) we have
\begin{endproofeqnarray*}
\overline\eta(A^*\latimpl B^*)
&=&\overline\eta\Bigl(\bigcap_{a\in A}\setof{a\dlatimpl b}{b\in B}^*\Bigr)\\
&=&\overline\eta\Bigl(\Bigsetof{\bigjoin_{a\in A}\bigl(a\dlatimpl F(a)\bigr)}{F\in\pre BA}^*\Bigr)\\
&=&\bigmeet_{F\in\pre BA}\eta\Bigl(\bigjoin_{a\in A}\bigl(a\dlatimpl F(a)\bigr)\Bigr)\\
&=&\bigmeet_{F\in\pre BA}\bigjoin_{a\in A}\eta\bigl(a\dlatimpl F(a)\bigr)\\
&=&\bigjoin_{a\in A}\bigmeet_{b\in B}\eta(a)\dlatimpl\eta(b)\\
&=&\bigjoin_{a\in A}\bigl(\eta(a)\dlatimpl\overline\eta(B^*)\bigr)\quad\text{by Proposition \ref{utcprops}(iii)}\\
&=&\overline\eta(A^*)\dlatimpl\overline\eta(B^*)\quad\text{by Lemma \ref{distprops} $(4)^\dualmarker$}.
\end{endproofeqnarray*}
\end{proof}

We proceed now to establishing (2) and (3) to complete the proof of Theorem K. By the preceding proposition we may fix $\overline\eta:{\mathstr O}_\omega^\dualmarker:={\mathstr O}^\omega\bigl({\mathstr P}_\omega\bigr)^\dualmarker\dembeds\Dgs$. For each $k\in\omega$ choose $a_k\incl\omega$ such that $a_k$ has $n_k$-many elements and $k\not=l\Implies a_k\cap a_l=\emptyset$. Clearly, for each $k$ there exists $E_k\in{\cal O}\bigl(\power(a_k)\bigr)$ such that 
\[{\mathstr O}_{n_k}[D_k,\lato]\iso{\mathstr O}_\omega[E_k,\power(a_k)]\]
and
\[{\mathstr O}_{n_k}[D_k,\lato]^\dualmarker
={\mathstr O}_{n_k}^\dualmarker[\lato,D_k]
\iso{\mathstr O}_\omega^\dualmarker[\power(a_k),E_k].\]
Set 
\[\dgp_k:=\overline\eta\bigl(\power(a_k)\bigr)\text{ and }\dgq_k:=\overline\eta(E_k).\]
Then immediately,
\[{\mathstr O}_{n_k}[D_k,\lato]^\dualmarker\dembeds\Dgs[\dgp_k,\dgq_k]\]
as required by (2). Towards (3), note that obviously
\[\dgp_k\leqs\dgq_k\quad\text{so}\quad \dgp_k\join\dgq_k=\dgq_k.\]
For $k\not=\ell$, we have, since $E_\ell\incl a_\ell$,
\[\dgp_k\join\dgq_\ell=\overline\eta\bigl(\power(a_k)\bigr)\join\overline\eta(E_\ell)=\overline\eta\bigl(\power(a_k)\cap E_l\bigr)=\overline\eta(\set\emptyset)\geq_{\ssf s}\overline\eta(E_k)=\dgq_k.\]
Hence, if \Dgs\ were a complete and completely distributive lattice, we could set
\[\dgr:=\bigmeet_{\ell\in\omega}\dgq_\ell\]
and compute
\[\dgp_k\join\dgr=\bigmeet_{\ell\in\omega}(\dgp_k\join\dgq_\ell)=\dgq_k\]
as required by (3). Since it isn't, we need a slightly more cumbersome construction using the ``pseudo meet" of Remark \ref{genmeet}. Choose $P_k\in\dgp_k$. By Proposition \ref{PomegaEmbeds} for each $\ell\in\omega$ we may choose $Q_\ell\in\dgq_\ell$, $m_\ell\in\omega$ and upward Turing closed sets $Q_{\ell,i}$ such that $Q_\ell=\bigmeet_{i<m_\ell}Q_{\ell,i}$. Set
\[R:=\bigmeet_{\ell\in\omega}Q_\ell:=\setof{(\ell)^\frown g}{\ell\in\omega\text{ and }g\in Q_\ell}\quad\text{and}\quad\dgr:=\degs(R).\]
Clearly $\dgp_k\join\dgr\leqs\dgq_k$. For $k\not=\ell$, we have as above
\[\degs(P_k\join Q_\ell)=\overline\eta(\set\emptyset)\geq_{\ssf s}\degs(Q_{k,0}),\]
so since $Q_{k,0}$ is upward Turing closed, by Proposition \ref{simpleutc}, $P_k\join Q_\ell\incl Q_{k,0}$. Hence, if we set
\[\Phi_k\bigl(f\oplus(\ell)^\frown g\bigr):=\cases{g,&if $k=\ell$;\cr
\noalign{\smallskip}
(0)^\frown(f\oplus g),&otherwise;}\]
we have $\Phi_k:P_k\join R\to Q_k$ --- that is, $\dgq_k\leqs\dgp_k\join\dgr$ as required.
\end{section}

\addcontentsline{toc}{section}{References}

\end{document}
\bye